\documentclass[9pt]{article}
\usepackage{amsfonts,amssymb,amsmath,comment}
\usepackage{graphicx,graphics,color}


\topmargin -0.5in
\textheight 9in
\oddsidemargin 0.15in
\evensidemargin 0.25in
\textwidth 6.15in


 \makeatletter
 \@addtoreset{equation}{section}
 \makeatother

 \newcounter{enunciato}[section]

 \newtheorem{ittheorem}{Theorem}
 \newtheorem{itlemma}{Lemma}
 \newtheorem{itproposition}{Proposition}
 \newtheorem{itdefinition}{Definition}
 \newtheorem{itcorollary}{Corollary}
 \newtheorem{itconjecture}{Conjecture}

 \newenvironment{theorem}{\addtocounter{enunciato}{1}
 \begin{ittheorem}}{\end{ittheorem}}

 \newenvironment{lemma}{\addtocounter{enunciato}{1}
 \begin{itlemma}}{\end{itlemma}}

 \newenvironment{proposition}{\addtocounter{enunciato}{1}
 \begin{itproposition}}{\end{itproposition}}


\parskip=3pt plus 1pt minus 1pt

\newcommand{\halmos}{\rule{1ex}{1.4ex}}

\newenvironment{proof}{\noindent {\em Proof}.\,\,}
{\hspace*{\fill}$\halmos$\medskip}

\def \qed {{\hspace*{\fill}$\halmos$\medskip}}


\def \ba {\begin{array}}
\def \ea {\end{array}}

\def \R {{\mathbb R}}

\def \O{\Omega}

\def \Leb {{\mathcal{L}}}


\begin{document}
\title{Torsional rigidity for cylinders with a Brownian fracture}

\author{{M. van den Berg}\\
School of Mathematics, University of Bristol\\
University Walk, Bristol BS8 1TW\\
United Kingdom\\
\texttt{mamvdb@bristol.ac.uk}\\
\\
{F. den Hollander}\\
Mathematical Institute, Leiden University\\
P.O.\ Box 9512, 2300 RA Leiden\\
The Netherlands\\
\texttt{denholla@leidenuniv.nl}}

\date{27 November 2017}

\maketitle

\begin{abstract}
We obtain bounds for the expected loss of torsional rigidity of a cylinder $\Omega_L=(-L/2,L/2)
\times \Omega\subset \R^3$ of length $L$ due to a Brownian fracture that starts at a random point
in $\Omega_L,$ and runs until the first time it exits $\Omega_L$. These bounds are expressed in
terms of the geometry of the cross-section $\Omega \subset \R^2$. It is shown that if $\Omega$ is
a disc with radius $R$, then in the limit as $L \rightarrow \infty$ the expected loss of torsional rigidity
equals $cR^5$ for some $c\in (0,\infty)$. We derive bounds for $c$ in terms of the expected
Newtonian capacity of the trace of a Brownian path that starts at the centre of a ball in $\R^3$
with radius $1,$ and runs until the first time it exits this ball.

\medskip\noindent
{\it AMS} 2000 {\it subject classifications.} 35J20, 60G50.\\
{\it Key words and phrases.} Brownian motion, torsional rigidity, heat kernel, capacity.

\medskip\noindent
{\it Acknowledgment.}
The authors acknowledge support by The Leverhulme Trust through International Network Grant
\emph{Laplacians, Random Walks, Bose Gas, Quantum Spin Systems}. FdH was also supported
by the Netherlands Organisation for Scientific Research (NWO) through Gravitation-grant
NETWORKS-024.002.003.

\end{abstract}


\section{Introduction}
\label{sec1}

In Section \ref{subsec1.1} we formulate the problem, in Section \ref{subsec1.2} we recall some
basic facts, in Section \ref{subsec1.3} we state our main theorems, and in Section \ref{subsec1.4}
we discuss these theorems and provide an outline of the remainder of the paper.


\subsection{Background and motivation}\label{subsec1.1}

Let $\Lambda$ be an open and bounded set in $\R^m$, with boundary $\partial \Lambda$
and Lebesgue measure $|\Lambda|$. Let $\Delta$ be the Laplace operator acting in
$\Leb^2(\R^m)$. Let $(\bar{\beta}(s),s \geq 0; \bar{\mathbb{P}}_x,x\in \R^m)$ be Brownian motion in $\R^m$
with generator $\Delta$. Denote the first exit time from $\Lambda$ by
\begin{equation*}
\bar{\tau}(\Lambda) = \inf\{s \geq 0\colon\, \bar{\beta}(s)\in \R^m-\Lambda\},
\end{equation*}
and the expected lifetime in $\Lambda$ starting from $x$ by
\begin{equation*}
v_{\Lambda}(x) = \bar{\mathbb{E}}_x[\bar{\tau}(\Lambda)], \qquad x\in \Lambda,
\end{equation*}
where $\bar{\mathbb{E}}_x$ denotes the expectation associated with $\bar{\mathbb{P}}_x$. The function
$v_{\Lambda}$ is the unique solution
of the equation
\begin{equation*}
-\Delta v = 1, \qquad v\in H_0^1(\Lambda),
\end{equation*}
where the requirement $v\in H_0^1(\Lambda)$ imposes Dirichlet boundary conditions on
$\partial \Lambda$. The function $v_{\Lambda}$ is known as the \emph{torsion function} and found its origin
in elasticity theory. See  for example \cite{TG}. The \emph{torsional rigidity} $\mathcal{T}(\Lambda)$ of $\Lambda$ is defined
by
\begin{equation*}
\mathcal{T}(\Lambda)=\int_{\Lambda}dx\,v_{\Lambda}(x).
\end{equation*}

Torsional rigidity plays a key role in many different parts of analysis. For example, the torsional rigidity of a
cross-section of a beam appears in the computation of the angular change when a beam of a given
length and a given modulus of rigidity is exposed to a twisting moment \cite{Bandle}, \cite{PSZ}.
It also arises in the calculation of the heat content of sets with time-dependent boundary conditions
\cite{MvdB}, in the definition of gamma convergence \cite{BB}, and in the study of minimal submanifolds
\cite{MP}. Moreover, $\mathcal{T}(\Lambda)/|\Lambda|$ equals the expected lifetime of Brownian motion
in $\Lambda$ when averaged with respect to the uniform distribution over all starting points $x\in\Lambda$.

Consider a finite cylinder in $\R^3$ of the form
\begin{equation*}
\Omega_L = (-L/2,L/2) \times \Omega,
\end{equation*}
where $\Omega$ is an open and bounded subset of $\R^2$, referred to as the cross-section. It follows
from \cite[Theorem 5.1]{vdBBB} that
\begin{equation}\label{e6}
\mathcal{T}'(\Omega)L \geq \mathcal{T}(\Omega_L) = \mathcal{T}'(\Omega)L
- 4\mathcal{H}^2(\Omega)\lambda'_1(\Omega)^{-3/2},
\end{equation}
where $\mathcal{H}^2$ denotes the two-dimensional Hausdorff measure, $\lambda'_1(\Omega)$ is the
first eigenvalue of the two-dimensional Dirichlet Laplacian acting in $\Leb^2(\Omega)$, and $\mathcal{T}'
(\Omega)$ is the two-dimensional torsional rigidity of the planar set $\Omega$.

We observe that in \eqref{e6} the leading term is extensive, i.e., proportional to $L$, and that its coefficient
$\mathcal{T}'(\Omega)$ depends on the torsional rigidity of the cross-section $\Omega$. There is a
substantial literature on the computation of the two-dimensional torsional rigidity for given planar sets
$\Omega$. See, for example, \cite{TG} and \cite{R}. The finiteness of the cylinder induces a correction
that is at most $O(1)$.

Let $(\beta(s), s \geq 0; \mathbb{P}_x,x\in \R^m)$ be a Brownian motion, independent of $(\bar{\beta}(s),
s \geq 0; \bar{\mathbb{P}}_x,x\in \R^m)$,
and let
\begin{equation}\label{e7}
\tau(\Lambda)=\inf\{s \geq 0\colon\, \beta(s)\in \R^m-\Lambda\}.
\end{equation}
Denote its trace in $\Lambda$ up to the first exit time of $\Lambda$ by
\begin{equation}\label{e8}
\mathfrak{B}(\Lambda) = \{\beta(s)\colon\, 0 \leq s \leq \tau(\Lambda)\}.
\end{equation}
In this paper we investigate the effect of a Brownian fracture $\mathfrak{B}(\Omega_L)$ on the
torsional rigidity of $\Omega_L$. More specifically, we consider the random variable $\mathcal{T}
(\Omega_L-\mathfrak{B}(\Omega_L))$, and we investigate the expected loss of torsional rigidity
averaged over both the path $\mathfrak{B}(\Omega_L)$ and the starting point $y$, defined by
\begin{equation}\label{e9}
\mathfrak{T}(\Omega_L) = \frac{1}{|\Omega_L|}\int_{\Omega_L}dy\,
\mathbb{E}_y \big[\mathcal{T}(\Omega_L)
-\mathcal{T}(\Omega_L-\mathfrak{B}(\Omega_L))\big],
\end{equation}
where $\mathbb{E}_y$ denotes the expectation associated with $\mathbb{P}_y$.


\subsection{Preliminaries}\label{subsec1.2}

It is well known that the rich interplay between elliptic and parabolic partial differential
equations provides tools for linking various properties. See, for example, the monograph by
Davies \cite{davies}, and \cite{vdB2,vdB3,vdBBB,vdBFNT,vdBBdH} for more recent
results. As both the statements and the proofs of Theorems \ref{the1.1}, \ref{the1.2} and
\ref{the1.3} below rely on the connection between the torsion function, the torsional rigidity, and
the heat content, we recall some basic facts.

For an open set $\Lambda$ in $\R^m$ with boundary $\partial \Lambda$, we denote the
Dirichlet heat kernel by $p_{\Lambda}(x,y;t),\ x,y\in\Lambda,\ t>0$. The integral
\begin{equation}\label{e10}
u_{\Lambda}(x;t) = \int_{\Lambda} dy\,p_{\Lambda}(x,y;t), \qquad x \in \Lambda, \, t>0,
\end{equation}
is the unique weak solution of the heat equation
\begin{equation*}
\frac{\partial u}{\partial t}(x;t) = \Delta u(x;t),\qquad x\in\Lambda,\, t>0,
\end{equation*}
with initial condition
\begin{equation*}
\lim_{t \downarrow 0} u(\,\cdot\,;t)= 1\,\hbox{ in }\Leb^1(\Lambda),
\end{equation*}
and with Dirichlet boundary conditions
\begin{equation*}
u(\,\cdot\,;t) \in H_0^1(\Lambda),\qquad t>0.
\end{equation*}
We denote the heat content of $\Lambda$ at time $t$ by
\begin{equation}\label{e14}
Q_{\Lambda}(t) = \int_{\Lambda} dx\,u_{\Lambda}(x;t)
= \int_{\Lambda} dx\,\int_{\Lambda} dy\, p_{\Lambda}(x,y;t), \qquad t>0.
\end{equation}
The heat content represents the amount of heat in $\Lambda$ at time $t$ when $\Lambda$
has initial temperature $1$ while $\partial\Lambda$ is kept at temperature $0$ for all $t>0$.
Since the Dirichlet heat kernel is non-negative and is monotone in $\Lambda$, we have
\begin{equation}\label{e15}
0 \leq p_{\Lambda}(x,y;t) \leq p_{\R^m}(x,y;t) = (4\pi t)^{-m/2}\,e^{-|x-y|^2/(4t)}.
\end{equation}
It follows from \eqref{e10} and \eqref{e15} that
\begin{equation*}
0 \leq u_{\Lambda}(x;t) \leq 1, \qquad x \in \Lambda, \, t>0,
\end{equation*}
and that if $|\Lambda|<\infty$, then
\begin{equation}\label{e17}
0 \leq Q_{\Lambda}(t) \leq |\Lambda|, \qquad t>0.
\end{equation}
In the latter case we also have an eigenfunction expansion for the Dirichlet heat kernel in
terms of the Dirichlet eigenvalues $\lambda_1(\Lambda)\leq\lambda_2(\Lambda)\leq\dots,$
and a corresponding orthonormal set of eigenfunctions $\{\varphi_{\Lambda,1},
\varphi_{\Lambda,2},\dots\}$, namely,
\begin{equation*}
p_{\Lambda}(x,y;t) = \sum_{j=1}^{\infty} e^{-t\lambda_j(\Lambda)}
\varphi_{\Lambda,j}(x)\varphi_{\Lambda,j}(y), \qquad x,y \in \Lambda, \, t>0.
\end{equation*}
We note that by \cite[p.63]{davies} the eigenfunctions are in $\Leb^p(\Lambda)$ for all $1\leq p
\leq \infty$. It follows from Parseval's formula that
\begin{equation}\label{e19}
Q_{\Lambda}(t) = \sum_{j=1}^{\infty} e^{-t\lambda_j(\Lambda)}
\left(\int_{\Lambda} dx\,\varphi_{\Lambda,j}(x)\right)^2
\leq e^{-t\lambda_1(\Lambda)}\sum_{j=1}^{\infty}
\left(\int_{\Lambda} dx\,\varphi_{\Lambda,j}(x)\right)^2
= e^{-t\lambda_1(\Lambda)}\,|\Lambda|, \qquad t>0,
\end{equation}
which improves upon \eqref{e17}. Since the torsion function is given by
\begin{equation*}
v_{\Lambda}(x) = \int_{[0,\infty)} dt\,u_{\Lambda}(x;t), \qquad x \in \Lambda,
\end{equation*}
we have that
\begin{equation}\label{e21}
\mathcal{T}(\Lambda) = \int_{[0,\infty)} dt\,Q_\Lambda(t)
= \sum_{j=1}^{\infty} \lambda_j(\Lambda)^{-1}
\left(\int_\O dx\,\varphi_{\Lambda,j}(x)\right)^2.
\end{equation}


\subsection{Main theorems}\label{subsec1.3}

To state our theorems, we introduce the following notation. Two-dimensional quantities, such as
the heat content for the planar set $\Omega$, carry a superscript $'$. The Newtonian capacity of
a compact set $K\subset \R^3$ is denoted by $\textup{cap}(K)$. For $R,L>0$ we define
\begin{equation}\label{e21ext}
\begin{aligned}
D_R &= \{x'\in\R^2\colon\,|x'|<R\},\\
C_{L,R} &= (-L/2,L/2) \times D_R,\\
C_R &= C_{R,\infty}.
\end{aligned}
\end{equation}
For $x \in \R^3$ and $r>0$, we write $B(x;r) =\{y\in \R^3\colon\,|y-x|<r\}$.

\begin{theorem}\label{the1.1}
If $\Omega \subset \R^2$ is open and bounded, then
\begin{enumerate}
\item[\textup{(i)}]
\begin{equation}\label{e22}
0 \leq \mathcal{T}(\Omega_L)-\mathcal{T}'(\Omega)L
+ \frac{4}{\pi^{1/2}}\int_{[0,\infty)} dt\ t^{1/2}Q_{\Omega}'(t)
\leq \frac{8}{L}\,\lambda_1'(\Omega)^{-1}\mathcal{T}'(\Omega), \qquad L>0,
\end{equation}
\item[\textup{(ii)}]
\begin{equation}\label{e23}
\mathfrak{T}(\Omega_L)\le 6\lambda_1'(\Omega)^{-1/2}\mathcal{T}'(\Omega),
\qquad L > 0,
\end{equation}
\item[\textup{(iii)}]
\begin{equation}\label{e23a}
\limsup_{L\rightarrow\infty}\mathfrak{T}(\Omega_L)
\leq 4\lambda_1'(\Omega)^{-1/2}\mathcal{T}'(\Omega).
\end{equation}
\end{enumerate}
\end{theorem}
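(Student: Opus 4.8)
The plan is to exploit the heat-semigroup representation \eqref{e21} together with the product structure of the cylinder $\Omega_L=(-L/2,L/2)\times\Omega$. For part (i), the key observation is that the Dirichlet heat kernel on $\Omega_L$ factorizes as $p_{\Omega_L}(x,y;t)=p_{(-L/2,L/2)}(x_1,y_1;t)\,p_{\Omega}'(x',y';t)$, where $x=(x_1,x')$, $y=(y_1,y')$. Hence $Q_{\Omega_L}(t)=Q_{(-L/2,L/2)}(t)\,Q_\Omega'(t)$, and by \eqref{e21} we get $\mathcal{T}(\Omega_L)=\int_{[0,\infty)}dt\,Q_{(-L/2,L/2)}(t)\,Q_\Omega'(t)$. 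The one-dimensional heat content of an interval of length $L$ is classical; writing $Q_{(-L/2,L/2)}(t)=L-g(t;L)$ where $g$ is the boundary-layer correction, one has $g(t;L)\to (4/\pi^{1/2})\,t^{1/2}\cdot 2$ as $L\to\infty$ — more precisely, I would use the exact formula $Q_{(-L/2,L/2)}(t) = L\,\mathrm{erf}(L/(4t^{1/2})) + (\text{exponentially small / explicit})$, or better the bounds $L - (8/\pi^{1/2})t^{1/2} \leq Q_{(-L/2,L/2)}(t) \leq L - (8/\pi^{1/2})t^{1/2} + (\text{correction controlled by }e^{-L^2/(16t)})$. Substituting into the integral and using $\int_{[0,\infty)}dt\,Q_\Omega'(t)=\mathcal{T}'(\Omega)$ gives the leading term $\mathcal{T}'(\Omega)L$ and the subtracted term $-(4/\pi^{1/2})\int_{[0,\infty)}dt\,t^{1/2}Q_\Omega'(t)$; the remainder is bounded using the exponential decay of $Q_\Omega'(t)\leq e^{-t\lambda_1'(\Omega)}|\Omega|$ from \eqref{e19} and explicit integration against the one-dimensional error term, yielding the $8L^{-1}\lambda_1'(\Omega)^{-1}\mathcal{T}'(\Omega)$ bound. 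The left inequality $\mathcal{T}(\Omega_L)\geq\mathcal{T}'(\Omega)L-(4/\pi^{1/2})\int t^{1/2}Q_\Omega'(t)\,dt$ follows from the clean lower bound on $Q_{(-L/2,L/2)}(t)$.

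For parts (ii) and (iii), I would use the Feynman--Kac / spectral inequality relating $\mathcal{T}(\Omega_L)-\mathcal{T}(\Omega_L-\mathfrak{B}(\Omega_L))$ to the heat content: removing a set $\mathfrak{B}$ only decreases the heat kernel, so $0\le Q_{\Omega_L}(t)-Q_{\Omega_L-\mathfrak{B}(\Omega_L)}(t)$, and this difference is controlled by the expected Brownian occupation / hitting of $\mathfrak{B}$. The cleaner route is to bound $\mathcal{T}(\Omega_L)-\mathcal{T}(\Omega_L-\mathfrak{B})\le \mathcal{T}(\Omega_L)$ trivially is too weak; instead, average over $y$ first and over the fracture path, and dominate the loss by $\int_{[0,\infty)}dt\,[Q_{\Omega_L}(t)-Q_{\Omega_L\setminus\mathfrak{B}}(t)]$, then use the product structure together with the fact that the fracture, projected onto the cross-section, is contained in $\Omega$ and the one-dimensional factor contributes at most $\mathcal{T}'(\Omega)$-type mass. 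The quantitative input is that the loss of heat content from removing a path started at $y$ and killed at $\tau(\Omega_L)$ is, after the $|\Omega_L|^{-1}\int_{\Omega_L}dy$ average, bounded by a constant times $\lambda_1'(\Omega)^{-1/2}\mathcal{T}'(\Omega)$ — the half-power in $\lambda_1'$ matches the $t^{1/2}$ scaling already seen in (i), and the constant $6$ (resp.\ $4$ in the limit) comes from combining the one-dimensional boundary-layer estimate with a crude bound on the capacity contribution, losing a factor relative to the sharp constant.

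The main obstacle is part (ii)/(iii): one must quantify the expected loss of torsional rigidity due to the random fracture without yet invoking the finer capacity estimates of the later sections. The natural mechanism is that $\mathcal{T}(\Omega_L)-\mathcal{T}(\Omega_L-\mathfrak{B}(\Omega_L)) \le \int_{[0,\infty)}dt \int_{\Omega_L}dx\,\bigl(u_{\Omega_L}(x;t)-u_{\Omega_L-\mathfrak{B}(\Omega_L)}(x;t)\bigr)$, and the integrand can be written via the strong Markov property as the probability that an independent Brownian motion from $x$, run for time $t$, meets $\mathfrak{B}(\Omega_L)$ before exiting $\Omega_L$. Averaging over the starting point $y$ of the fracture and using reversibility/symmetry of Brownian motion, this double Brownian expectation collapses to an expression governed by the cross-sectional heat content $Q_\Omega'$ and the one-dimensional exit time, from which the stated bounds follow after optimizing the split between "short times" (where the $t^{1/2}Q_\Omega'$ integral dominates) and "long times" (controlled by $e^{-t\lambda_1'(\Omega)}$). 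Getting the explicit constants $6$ and $4$ requires care in handling the endpoints of the interval $(-L/2,L/2)$, where the fracture is most likely to escape quickly, but these are routine once the representation is set up; the improvement from $6$ to $4$ as $L\to\infty$ is exactly the disappearance of the $O(1/L)$ term from (i).
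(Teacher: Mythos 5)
Your part (i) follows the paper's route (factorise the heat content, $Q_{\Omega_L}(t)=Q^{(1)}_{(-L/2,L/2)}(t)\,Q'_{\Omega}(t)$, and integrate via \eqref{e21}), but your one-dimensional boundary-layer coefficient is off by a factor $2$: the deficit of the interval is $4t^{1/2}/\pi^{1/2}$ in total ($2t^{1/2}/\pi^{1/2}$ per endpoint), not $8t^{1/2}/\pi^{1/2}$ as you write, so the identity you would obtain contradicts \eqref{e22} itself. The paper uses the two-sided bound \eqref{e27}, $L-4t^{1/2}/\pi^{1/2}\le Q^{(1)}_{(-L/2,L/2)}(t)\le L-4t^{1/2}/\pi^{1/2}+8t/L$, and the $8t/L$ term integrated against $Q'_\Omega$ with the spectral estimate \eqref{e29} gives exactly $\tfrac{8}{L}\lambda_1'(\Omega)^{-1}\mathcal{T}'(\Omega)$; your exponential error term could be made to work, but as written the constants do not come out.

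The genuine gap is in (ii)--(iii). You reduce the loss of torsional rigidity to a double Brownian expectation (the probability that an independent motion from $x$ meets $\mathfrak{B}(\Omega_L)$ before exiting) and assert that after averaging it ``collapses'' to an expression governed by $Q'_\Omega$, with the constants $6$ and $4$ being ``routine''. That estimate is precisely the hard, capacity-type analysis which the paper only carries out for a disc in Sections 3--4, and it is not what Theorem \ref{the1.1} rests on. The paper's mechanism is purely geometric: since $\tau(\Omega_L)\le\tau'(\Omega)$, the fracture satisfies $\mathfrak{B}(\Omega_L)\subset[\min_{0\le s\le\tau'(\Omega)}\beta_1(s),\max_{0\le s\le\tau'(\Omega)}\beta_1(s)]\times\Omega$, so $\Omega_L-\mathfrak{B}(\Omega_L)$ contains two sub-cylinders of total length at least $L$ minus the range of $\beta_1$; applying the lower bound of (i) to each gives \eqref{e33}. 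Then the exact formula $\mathbb{E}[\mathrm{range\ of\ }\beta_1\mathrm{\ up\ to\ }\tau]=4\tau^{1/2}/\pi^{1/2}$ together with the identity $\int_\Omega dy'\,\mathbb{E}_{y'}[\tau'(\Omega)^{1/2}]=\tfrac12\int_{[0,\infty)}d\tau\,\tau^{-1/2}Q'_\Omega(\tau)$ turns everything into spectral quantities bounded by $\lambda_1'(\Omega)^{-1/2}\mathcal{T}'(\Omega)$, yielding $2+4=6$. Without this (or some equally concrete substitute) your constants are unsupported, and nothing in your sketch shows the bound is uniform in $L$. Finally, your explanation of the improvement from $6$ to $4$ is not quite right: it comes from replacing the crude bound $\mathcal{T}(\Omega_L)\le L\mathcal{T}'(\Omega)$ of \eqref{e30} by the upper bound in \eqref{e22}, whose term $-\tfrac{4}{\pi^{1/2}}\int t^{1/2}Q'_\Omega$ cancels half of the $\tfrac{8}{\pi^{1/2}}\int t^{1/2}Q'_\Omega$ appearing in \eqref{e33}, at the price of an $O(1/L)$ term that disappears in the limit.
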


\begin{theorem}\label{the1.2}
If $\Omega= D_R$, then
\begin{equation}\label{e24}
\lim_{L\rightarrow \infty} \mathfrak{T}(C_{L,R})=cR^5, \qquad R>0,
\end{equation}
with
\begin{equation}\label{e25a}
\frac{67703\sqrt{79}-582194}{5059848192}\,\kappa \leq c \leq \frac{\pi}{2j_0},
\end{equation}
where $j_0=2.4048...$ is the first positive zero of the Bessel function $J_0$, and
\begin{equation*}
\kappa=\mathbb{E}_{0}\big[\textup{cap}\big(\mathfrak{B}(B(0;1))\big)\big].
\end{equation*}
\end{theorem}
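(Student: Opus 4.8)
The plan is to treat separately the three claims in Theorem~\ref{the1.2} — the scaling form $cR^5$ together with existence of the limit, the upper bound on $c$, and the lower bound on $c$ — the upper bound being essentially immediate, the existence soft, and the lower bound where the work lies. Brownian scaling shows that, started from $y$, the fracture $\mathfrak{B}(C_{L,R})$ has the law of $R\,\mathfrak{B}(C_{L/R,1})$ started from $y/R$, while $v_{aC}(ax)=a^2 v_C(x)$, $\mathcal{T}(aC)=a^5\mathcal{T}(C)$ and $|aC|=a^3|C|$; combining these gives $\mathfrak{T}(C_{L,R})=R^5\mathfrak{T}(C_{L/R,1})$, so it suffices to show that $c:=\lim_{M\to\infty}\mathfrak{T}(C_{M,1})$ exists, and then \eqref{e24} holds with this $c$. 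The upper bound is Theorem~\ref{the1.1}(iii) applied with $\Omega=D_R$: the disc torsion function is $v'_{D_R}(x')=\tfrac14(R^2-|x'|^2)$, so $\mathcal{T}'(D_R)=\pi R^4/8$, and $\lambda_1'(D_R)=j_0^2 R^{-2}$, whence $\limsup_{L\to\infty}\mathfrak{T}(C_{L,R})\le 4(R/j_0)(\pi R^4/8)=\pi R^5/(2j_0)$, i.e.\ $c\le\pi/(2j_0)$.

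\emph{Existence of the limit.} Write $\mathfrak{T}(C_{L,R})=\frac1L\int_{-L/2}^{L/2}\psi_L(a)\,da$, where $\psi_L(a)=\frac1{\pi R^2}\int_{D_R}\mathbb{E}_{(a,y')}[\mathcal{T}(C_{L,R})-\mathcal{T}(C_{L,R}-\mathfrak{B}(C_{L,R}))]\,dy'$. Two ingredients are needed. First, $0\le\psi_L(a)\le M$ uniformly in $L\ge1$ and $a$: the strong Markov property gives $\mathcal{T}(\Lambda)-\mathcal{T}(\Lambda-K)=\int_\Lambda\bar{\mathbb{E}}_x[v_\Lambda(\bar\beta(\bar\sigma_K))\mathbf1_{\{\bar\sigma_K<\bar\tau(\Lambda)\}}]\,dx$, with $\bar\sigma_K$ the first hitting time of $K$, so $\mathcal{T}(\Lambda)-\mathcal{T}(\Lambda-K)\le\|v_\Lambda\|_\infty\int_\Lambda\bar{\mathbb{P}}_x(\bar\sigma_K<\bar\tau(\Lambda))\,dx$; for $\Lambda=C_{L,R}$ we use $\|v_{C_{L,R}}\|_\infty\le R^2/4$ together with $\sup_{L\ge1}\mathbb{E}_y\big[\int_{C_{L,R}}\bar{\mathbb{P}}_x(\bar\sigma_{\mathfrak{B}}<\bar\tau(C_{L,R}))\,dx\big]<\infty$, which holds because Brownian hitting probabilities decay exponentially in the axial direction and the axial extent of the fracture has an exponential tail (the fracture leaves the cross-section in expected time $\le R^2/4$). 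Secondly, for $\tfrac L2-|a|\ge B$ the fracture and the torsion function differ from their counterparts in the infinite cylinder $\R\times D_R$ only on an event of probability $O(e^{-cB/R})$ and by a correction of $v_{C_{L,R}}$ exponentially small in $\tfrac L2-|a|$, so $|\psi_L(a)-\psi_\infty|\le\varepsilon(B)$ with $\varepsilon(B)\downarrow0$, where $\psi_\infty=\frac1{\pi R^2}\int_{D_R}\mathbb{E}_{(0,y')}[\,\text{loss in }\R\times D_R\,]\,dy'$. Splitting the $a$-average at $\tfrac L2-|a|=B$ gives $|\mathfrak{T}(C_{L,R})-\psi_\infty|\le\varepsilon(B)+4BM/L$; letting $L\to\infty$ and then $B\to\infty$ shows $\mathfrak{T}(C_{L,R})\to\psi_\infty$, which by scaling equals $cR^5$.

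\emph{Lower bound.} Take $R=1$, fix a cross-sectional position $y'$ with $|y'|<1$ and a radius $\rho\in(0,1-|y'|)$, and set $y=(0,y')$; for $L$ large, $B(y;\rho)\subset\Lambda:=C_{L,1}$. Since $\mathcal{T}(\Lambda)-\mathcal{T}(\Lambda-A)$ is nondecreasing in $A$ and $\mathfrak{B}(B(y;\rho))\subset\mathfrak{B}(\Lambda)$, the loss is at least $\mathcal{T}(\Lambda)-\mathcal{T}(\Lambda-\mathfrak{B}(B(y;\rho)))$. For compact $K\subset B(y;\rho)$ the function $g=v_\Lambda-v_{\Lambda-K}$ is non-negative, harmonic off $K$, equal to $v_\Lambda$ on $K$ and $0$ on $\partial\Lambda$; comparing $g$ with $V_-\,e_K^\Lambda$, where $V_-=\min_{B(y;\rho)}v_\Lambda$, $e_K^\Lambda$ is the equilibrium potential of $K$ in $\Lambda$ and $\mu_K$ its equilibrium measure, the maximum principle gives $g\ge V_-\,e_K^\Lambda$, whence $\mathcal{T}(\Lambda)-\mathcal{T}(\Lambda-K)=\int_\Lambda g\ge V_-\int_\Lambda e_K^\Lambda=V_-\int v_\Lambda\,d\mu_K\ge V_-^2\,\textup{cap}_\Lambda(K)\ge V_-^2\,\textup{cap}(K)$, using Fubini and $v_\Lambda(z)=\int_\Lambda G_\Lambda(x,z)\,dx$, that $\mu_K$ is supported in $K\subset B(y;\rho)$, and domain monotonicity of capacity. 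Taking expectation over the fracture (here $V_-$ is deterministic) and using Brownian scaling, $\mathbb{E}_y[\textup{cap}(\mathfrak{B}(B(y;\rho)))]=\rho\,\kappa$ since $y$ is the centre of the ball; letting $L\to\infty$, so that $v_{C_{L,1}}\uparrow v_{\R\times D_1}$ with $v_{\R\times D_1}(x)=\tfrac14(1-|x'|^2)$ and $\psi_L(0)\to\psi_\infty$, the contribution of $y'$ to $\psi_\infty$ is at least $\big(\tfrac14(1-(|y'|+\rho)^2)\big)^2\rho\,\kappa$. Integrating over $|y'|\in[0,1)$, dividing by $\pi$, and optimising $\rho$ — conveniently taken to be a fixed fraction of $1-|y'|$ — yields, after the resulting polynomial integration and maximisation, an explicit lower bound for $c$ of the stated form; finiteness and positivity of $\kappa$ (a Brownian path in $\R^3$ is not polar but is contained in a ball) give $c\in(0,\infty)$.

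\emph{Main obstacle.} The delicate part is the lower bound. One must make the potential-theoretic comparison rigorous when $K$ is a Brownian path — a closed set with empty interior — which requires knowing such a path to be non-polar and regular for itself, so that $e_K^\Lambda=1$ on $K$ and $g=v_\Lambda$ on $K$; and one must justify the interchange of the limit $L\to\infty$ with the cross-sectional average, namely the convergence $v_{C_{L,R}}\uparrow v_{\R\times D_1}$ on bounded sets and the treatment of the cylinder ends, which is exactly where the uniform bounds of the existence step enter.
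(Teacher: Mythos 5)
Your architecture matches the paper's (scaling reduction to a fixed cross-section, upper bound from Theorem \ref{the1.1}(iii) with $\mathcal{T}'(D_R)=\pi R^4/8$ and $\lambda_1'(D_R)=j_0^2/R^2$, lower bound by replacing the fracture by its initial piece in a small ball and estimating the loss through the capacity of the trace), and your limiting quantity $\psi_\infty$ is exactly the constant in \eqref{e43ext}. Your lower bound, however, takes a genuinely different and in fact stronger route than the paper's \eqref{e48}--\eqref{e53}: rather than keeping only the loss inside the small ball and bounding the hitting probability by subtracting full-space hitting probabilities on $\partial C_1$, you integrate the relative equilibrium potential over the whole cylinder and use $\int_\Lambda e_K^\Lambda=\int_K v_\Lambda\,d\mu_K^\Lambda\ge V_-\,\textup{cap}_\Lambda(K)\ge V_-\,\textup{cap}(K)$, so the loss is at least $V_-^2\,\textup{cap}(K)$; this chain is correct (the relative capacity dominates the Newtonian one since $G_\Lambda\le G_{\R^3}$), and with $\rho$ a fixed fraction of $1-|y'|$ it yields a constant several orders of magnitude above \eqref{e25a} (e.g.\ $\rho=(1-|y'|)/3$ gives roughly $\kappa/1080$ versus the stated $\approx 3.9\times10^{-6}\kappa$). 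But you must actually perform that integration and comparison: the theorem asserts a specific constant, and ``an explicit lower bound of the stated form'' is not yet a proof that the stated constant is a lower bound.

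The real gap is the existence of the limit, which is where the paper spends essentially all of its effort (Proposition \ref{prop1} with its ten steps, Lemma \ref{lem3}, and Lemma \ref{lem4}). You assert without proof (i) the uniform bound $\sup_{L\ge1}\mathbb{E}_y\big[\int_{C_{L,R}}\bar{\mathbb{P}}_x(\bar\sigma_{\mathfrak{B}}<\bar\tau(C_{L,R}))\,dx\big]<\infty$ and (ii) that $|\psi_L(a)-\psi_\infty|\le\varepsilon(B)$ whenever $L/2-|a|\ge B$, on the grounds of exponential decay in the axial direction. Both are true but neither is routine: (i) needs something like the paper's Step 7 (enlarging the fracture to the slab determined by its axial range, the range moment bound \eqref{f14}, and the computation \eqref{f17}--\eqref{f20}), and (ii) is precisely the content of Steps 2--9, where the comparison of the finite- and infinite-cylinder losses is split into the terms $A_1,A_2,A_3,B_2,B_4,B_5$, each controlled by a quantitative hitting estimate of the type of Lemma \ref{lem3}, by the heat-content bounds of Theorem \ref{the1.1}(i), and by Lemma \ref{lem4}, which is also what makes the infinite-cylinder loss well defined. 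Your claim (ii) in particular hides a two-region argument: near the ends the torsion-function comparison is not small, and one must instead use that the fracture rarely reaches there. So the outline is sound and the lower-bound mechanism is a nice improvement, but the existence part cannot stand as written without supplying these estimates.
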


We obtain better estimates when the Brownian fracture starts on the axis of the cylinder
$C_{L,R}$, with a uniformly distributed starting point. Let
\begin{equation}\label{e25c}
\mathfrak{C}(C_{L,R}) = \frac1L\int_{(-L/2,L/2)} dy_1\,
\mathbb{E}_{(y_1,0)}\Big[\mathcal{T}(C_{L,R})-\mathcal{T}\big(C_{L,R}-\mathfrak{B}(C_{L,R})\big)\Big].
\end{equation}

\begin{theorem}\label{the1.3}
If $\Omega= D_R$, then
\begin{equation}\label{e25d}
\lim_{L\rightarrow \infty}\mathfrak{C}(C_{L,R})=c'R^5, \qquad R>0,
\end{equation}
with
\begin{equation}\label{e25e}
 \frac{2867\sqrt{61}-21773}{303750}\, \kappa \leq c' \leq \frac{\pi}{4}\left(1+\frac{1}{j_0}\right).
\end{equation}
\end{theorem}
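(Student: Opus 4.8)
The plan is to establish the asymptotic identity \eqref{e25d} by the same scheme that (presumably) underlies Theorems~\ref{the1.1} and~\ref{the1.2}, namely by writing the loss of torsional rigidity as a time-integral of a loss of heat content, and then passing to the $L\to\infty$ limit via a scaling argument that reduces everything to the cross-section $D_R$ and to the Brownian path run in an infinite cylinder. First I would record, via \eqref{e21}, that for any open set $\Lambda$ and any compact $K\subset\Lambda$,
\begin{equation*}
\mathcal{T}(\Lambda)-\mathcal{T}(\Lambda-K)
=\int_{[0,\infty)}dt\,\big(Q_\Lambda(t)-Q_{\Lambda-K}(t)\big),
\end{equation*}
and that $Q_\Lambda(t)-Q_{\Lambda-K}(t)=\int_\Lambda dx\int_\Lambda dy\,[p_\Lambda(x,y;t)-p_{\Lambda-K}(x,y;t)]\ge 0$ by domain monotonicity of the Dirichlet heat kernel. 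Taking $\Lambda=C_{L,R}$, $K=\mathfrak{B}(C_{L,R})$, and averaging over the uniformly-distributed axial starting point $(y_1,0)$ as in \eqref{e25c}, gives $\mathfrak{C}(C_{L,R})$ as an integral over $t$ of an expected heat-content deficit. The non-negativity of the integrand is what makes the upper and lower bounds in \eqref{e25e} go through as truncations of this representation.

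Second, for the \textbf{lower bound} I would discard all of the deficit except the contribution of the fundamental mode on the cross-section. Concretely, one uses the spectral representation $p_{C_{L,R}}=\sum_j e^{-t\lambda_j(C_{L,R})}\varphi_j\otimes\varphi_j$ together with the product structure of the cylinder before the fracture, and a lower bound for $Q_{C_{L,R}}(t)-Q_{C_{L,R}-\mathfrak{B}}(t)$ in terms of $\textup{cap}\big(\mathfrak{B}(C_{L,R})\big)$ coming from the variational (capacitary) characterization of the heat-content loss; this is where the capacity $\kappa=\mathbb{E}_0[\textup{cap}(\mathfrak{B}(B(0;1)))]$ enters. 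Brownian scaling turns the path in a ball of radius $1$ into the path in a ball of radius $R$ with capacity scaled by $R$; since the fracture starts on the axis, for $L$ large it typically stays inside a ball $B((y_1,0);R)\subset C_{L,R}$ before exiting, so the contribution of such paths is $\asymp R\cdot(\text{geometric factor})$, and integrating the resulting heat-content bound over $t\in[0,\infty)$ (which produces, up to explicit constants, a power $R^{5}$ by dimensional analysis: length$^{2}$ from the two cross-sectional integrations in $Q$, length$^{2}$ from $\int dt$, and length$^{1}$ from the capacity) yields $c'R^5$ with the stated rational-times-$\sqrt{61}$ coefficient. The precise numerical constant $\tfrac{2867\sqrt{61}-21773}{303750}$ will come out of optimizing some one-parameter family of test functions or radii in this capacitary lower bound; I would expect $\sqrt{61}$ to be the discriminant of a quadratic arising from that optimization.

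Third, for the \textbf{upper bound} I would bound the expected heat-content deficit by the full "first-hit" estimate: the deficit at time $t$ is at most the heat that, started uniformly, is killed by the fracture, which one controls by $\int_{C_{L,R}}dx\,\mathbb{P}_x(\text{the }\bar\beta\text{-path hits }\mathfrak{B}\text{ before leaving }C_{L,R}\text{ within time }t)$ combined with the bound $u_{C_{L,R}}\le 1$. Because the fracture starts on the axis and the cross-section is a disc, after the $L\to\infty$ limit the relevant quantity factorizes into (i) a one-dimensional heat-content computation along the axis, which is $O(1)$ in $L$ and contributes the additive "$1$" in $1+\tfrac1{j_0}$, and (ii) the two-dimensional disc quantities $\lambda_1'(D_R)^{-1/2}=R/j_0$ and $\mathcal{T}'(D_R)=\tfrac{\pi R^4}{8}$ (so that the product $\lambda_1'(D_R)^{-1/2}\mathcal{T}'(D_R)=\tfrac{\pi R^5}{8 j_0}$), assembled exactly as in \eqref{e23a}; matching constants gives the clean upper bound $\tfrac{\pi}{4}(1+\tfrac1{j_0})R^5$, which is the axial-start improvement of the factor $4$ in Theorem~\ref{the1.1}(iii) down to essentially $2(1+\tfrac1{j_0})$ after dividing by the cross-sectional normalization.

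Finally, the \textbf{main obstacle} is the existence and identification of the limit \eqref{e25d} itself — showing that $\mathfrak{C}(C_{L,R})$ actually converges as $L\to\infty$, rather than merely being squeezed between $L$-uniform bounds. This requires controlling the $L$-dependence of both the heat kernel $p_{C_{L,R}}$ (which converges to $p_{C_R}$ for the infinite cylinder, with exponentially small corrections governed by $\lambda_1'(D_R)$ as in \eqref{e6} and \eqref{e22}) and, more delicately, the law of the Brownian fracture $\mathfrak{B}(C_{L,R})$: one must verify that the event that the path reaches the far ends $\{\pm L/2\}\times D_R$ has vanishing influence, so that $\mathfrak{B}(C_{L,R})$ may be replaced by $\mathfrak{B}(C_R)$ up to $o(1)$. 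Given the explicit disc geometry this is a dominated-convergence argument using the Gaussian bound \eqref{e15} and the known decay rate $e^{-t\lambda_1'(D_R)}$ of $Q'_{D_R}(t)$, but making the interchange of the $L\to\infty$ limit with the $t$-integral and the $\mathbb{E}_{(y_1,0)}$-expectation fully rigorous — uniformly over the axial average $\tfrac1L\int_{(-L/2,L/2)}dy_1$, whose mass escapes to infinity — is the technical heart of the proof. Once the limit is pinned down as an integral over $C_R$ of the expected heat-content deficit caused by $\mathfrak{B}(C_R)$ started at a point on the axis, the two bounds in \eqref{e25e} follow by the truncation arguments above.
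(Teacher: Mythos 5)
Your outline has the right flavour (reduce to the infinite cylinder, capacity near the start of the fracture for the lower bound, Theorem \ref{the1.1}--type estimates for the upper bound), but at each of the three decisive points the actual argument is missing or would not work as described. First, the existence and identification of the limit \eqref{e25d}, which you correctly flag as the technical heart, is not something you supply: in the paper this is Proposition \ref{prop1}, a ten--step argument that replaces $C_{L,R}$ by the infinite cylinder $C_R$ via monotonicity of torsion-function differences (Lemma \ref{lem4}), and controls the error terms ($A_1,A_2,A_3,B_2,B_4,B_5$) by quantitative hitting estimates such as Lemma \ref{lem3}, $\sup_y \mathbb{P}_y(\theta(\tilde L)\le\tau(C_R))\le (j_0+1)\pi^{1/2}e^{-j_0L^{1/2}/(2R^{1/2})}$; a generic appeal to dominated convergence does not by itself handle the averaging $\tfrac1L\int dy_1$ and the replacement of $\mathfrak{B}(C_{L,R})$ by $\mathfrak{B}(C_R)$, and the limit must be pinned down as $c'=\mathbb{E}_{(0,0)}[\int_{C_1}(v_{C_1}-v_{C_1-\mathfrak{B}(C_1)})]$ as in \eqref{e43ext} before either bound in \eqref{e25e} can be extracted.

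Second, your lower bound as sketched would fail to produce $\kappa$: the capacity of the whole fracture $\mathfrak{B}(C_{L,R})$ is not a Brownian rescaling of $\textup{cap}(\mathfrak{B}(B(0;1)))$, and your claim that the path typically stays inside $B((y_1,0);R)$ is false (at time $\tau'(D_R)$ the planar component is at distance exactly $R$ from the axis, and the axial excursion is unbounded). The paper's key device is to discard all of the fracture except its portion in a small ball $B(0;a)$, $a\in(0,\tfrac13)$ after scaling to $R=1$, bound $v_{C_1}(x)-v_{C_1-K}(x)\ge\tfrac{1-a^2}{4}\,\bar{\mathbb{P}}_x(\bar\tau_{\R^3-K}<\bar\tau(C_1))$, estimate this hitting probability from below via the equilibrium measure (the factors $\tfrac{1}{8\pi a}$ and $\tfrac{1}{4\pi(1-a)}$), integrate over $|x|<a$, and then optimize over $a$, which is exactly where $a=(\sqrt{61}-4)/15$ and the constant $\tfrac{2867\sqrt{61}-21773}{303750}$ come from; no "variational characterization of heat-content loss" is used, and without this truncation-and-optimization your scheme does not reach the stated constant. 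Third, the upper bound $\tfrac{\pi}{4}(1+\tfrac1{j_0})$ does not arise from a one-dimensional heat-content term: it comes from \eqref{e33} (removing the fracture leaves at most two cylinders of total length at least $L$ minus the axial range), the expected-range identity \eqref{e34}, and the new estimate $\mathbb{E}_0[\tau'(D_R)^{1/2}]\le\tfrac12\pi^{1/2}R$ (see \eqref{e67}), so that the expected lost length is at most $2R$, giving $2R\,\mathcal{T}'(D_R)=\tfrac{\pi}{4}R^5$ for the "$1$", while the "$\tfrac1{j_0}$" is the term $\tfrac{4}{\pi^{1/2}}\int t^{1/2}Q'_{D_R}(t)\,dt$ bounded via \eqref{e41}; your proposal asserts that "matching constants" yields the bound but never performs this computation.
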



\subsection{Discussion and outline}\label{subsec1.4}

Theorem \ref{the1.1}(i) is a refinement of \eqref{e6}, while Theorems \ref{the1.1}(ii) and \ref{the1.1}(iii)
provide upper bounds for the expected loss of torsional rigidity. Theorem \ref{the1.2} gives a formula
for the expected loss of torsional rigidity in the special case where $\Omega$ is a disc with radius $R$.
Theorem \ref{the1.3} does the same when the fracture starts on the axis of the cylinder, with a uniformly
distributed starting point.

Computing the bounds in \eqref{e25a} numerically, we find that the upper bound is
$0.653$ and the lower bound is approximately $.386 \times 10^{-5}\kappa$. Since
$\kappa$ is bounded from above by $\textup{cap}(B(0;1))=4\pi$,
the left-hand side is at most $0.485 \times 10^{-4}$. Thus, the bounds are
at least $4$ orders of magnitude apart. It is not clear what the correct order of $c$ should be.
The bounds for $c'$ in Theorem \ref{the1.3} are at least two orders of magnitude apart.

The remainder of this paper is organised as follows. The proof of Theorem \ref{the1.1}
is given in Section \ref{sec2}, and uses the spectral representation of the heat kernel in
Section \ref{subsec1.2}. The proofs of Theorems \ref{the1.2} and \ref{the1.3} are given
in Section \ref{sec4}, and rely on a key proposition, stated and proved in Section \ref{sec3},
that provides a representation of the constants  $c$ and $c'$.


\section{Proof of Theorem \ref{the1.1}}\label{sec2}

{\it Proof of Theorem \ref{the1.1}\textup{(i)}.}
We use separation of variables, and write $x=(x_1,x'),\ y=(y_1,y')$, $x_1,y_1 \in \R$, $x',y' \in \R^2$.
Since the heat kernel factorises, we have
\begin{equation*}
p_{\Omega_L}(x,y;t) = p_{(-L/2,L/2)}^{(1)}(x_1,y_1;t)\,p_{\Omega}'(x',y';t),
\qquad x,y\in \Omega_L,\ t>0,
\end{equation*}
where $p_{(-L/2, L/2)}^{(1)}(x_1,y_1;t)$ is the one-dimensional Dirichlet heat kernel for the
interval $(-L/2,L/2)$, and $p_{\Omega}'(x',y';t)$ is the two-dimensional Dirichlet heat kernel
for the planar set $\Omega$. By integrating over $\Omega_L$, we see that the heat content also
factorises,
\begin{equation}\label{e26}
Q_{\Omega_L}(t) = Q^{(1)}_{(- L/2, L/2)}(t)\,Q'_{\Omega}(t), \qquad t>0,
\end{equation}
where $Q^{(1)}_{(-L/2,L/2)}$ is the one-dimensional heat content for the interval
$(-L/2, L/2)$, and $Q_\Omega'$ is the two-dimensional heat content for the planar
set $\Omega$. In
\cite{vdBBB} it was shown that
\begin{equation}\label{e27}
L-\frac{4t^{1/2}}{\pi^{1/2}} \leq Q^{(1)}_{(-L/2,L/2)}(t)
\leq L-\frac{4t^{1/2}}{\pi^{1/2}} + \frac{8t}{L},
\qquad t>0.
\end{equation}
Combining \eqref{e21}, \eqref{e26} and \eqref{e27}, we have
\begin{align}\label{e28}
\mathcal{T}(\Omega_L)
&=\int _{[0,\infty)} dt\ Q_{\Omega_L}(t)
\leq \int _{[0,\infty)} dt\ \bigg(L-\frac{4t^{1/2}}{\pi^{1/2}}
+ \frac{8t}{L}\bigg) Q'_{\Omega}(t)\nonumber \\
&= L\mathcal{T}'(\Omega)- \frac{4}{\pi^{1/2}}\int_{[0,\infty)} dt\ t^{1/2}Q_{\Omega}'(t)
+\frac{8}{L}\int_{[0,\infty)} dt\ t\, Q'_{\Omega}(t).
\end{align}
To bound the third term in the right-hand side of \eqref{e28}, we use the identities in \eqref{e19}
and \eqref{e21} to obtain
\begin{align}\label{e29}
\int_{[0,\infty)} dt\ t\  Q'_{\Omega}(t)
&= \int_{[0,\infty)} dt\ t\sum_{j=1}^{\infty} e^{-t\lambda_j'(\Omega)}
\left(\int_{\Omega} dx\,\varphi_{\Omega,j}(x)\right)^2
= \sum_{j=1}^{\infty}\lambda_j'(\Omega)^{-2}
\left(\int_{\Omega} dx\,\varphi_{\Omega,j}(x)\right)^2 \nonumber \\
&\leq \lambda_1'(\Omega)^{-1}\sum_{j=1}^{\infty}\lambda_j'(\Omega)^{-1}
\left(\int_{\Omega} dx\,\varphi_{\Omega,j}(x)\right)^2
=\lambda_1'(\Omega)^{-1}\mathcal{T}'(\Omega).
\end{align}
This completes the proof of the right-hand side of \eqref{e22}. The left-hand side of \eqref{e22}
follows from \eqref{e21}, \eqref{e26} and the first inequality in \eqref{e27}. \qed

\medskip\noindent
{\it Proof of Theorem \ref{the1.1}\textup{(ii)}.}
Since $\Omega_L\subset \R\times\Omega$, we have that $v_{\Omega_L}(x_1,x')\le v_{\R\times \Omega}(x_1,x')
=v_{\Omega}'(x')$. Hence
\begin{equation}\label{e30}
\mathcal{T}(\Omega_L)\le \int_{(-L/2,L/2)} dx_1\ \int_{\Omega}dx'\ v_{\Omega}'(x')
= L\mathcal{T}'(\Omega).
\end{equation}
To prove the upper bound in \eqref{e23}, we recall \eqref{e9} and combine \eqref{e30} with a lower
bound for $\mathbb{E}_y[(\mathcal{T}(\Omega_L-\mathfrak{B}(\Omega_L))]$.
We observe that, for the Brownian motion defining $\mathfrak{B}(\Omega_L)$ (recall \eqref{e7}
and \eqref{e8}) with starting point $\beta(0)=(\beta_1(0),\beta'(0))$,
\begin{equation*}
\tau(\Omega_L) \leq \tau'(\Omega)=\inf\{s\ge 0\colon\, \beta'(s)\notin \Omega\}.
\end{equation*}
Hence
\begin{equation*}
\mathfrak {B}(\Omega_L) \subset \left[\max\left\{-\frac{L}{2},\min_{0\leq s \leq \tau'(\Omega)} \beta_1(s)\right\},
\min\left\{\frac{L}{2}, \max_{0 \leq s \leq \tau'(\Omega)} \beta_1(s)\right\}\right] \times \Omega.
\end{equation*}
Therefore $\Omega_L-\mathfrak{B}(\Omega_L)$ is contained in the union of at most two cylinders with
cross-section $\Omega$ and with lengths $\big(L/2+\min_{0 \leq s \leq \tau'(\Omega)}
\beta_1(s)\big)_+$ and $\big(L/2-\max_{0 \leq s \leq \tau'(\Omega)}\beta_1(s)\big)_+$,
respectively. For each of these cylinders we apply the lower bound in Theorem \ref{the1.1}(i),
taking into account that the total length of these cylinders is bounded from below by
$L-\big(\max_{0 \leq s \leq \tau'(\Omega)}\beta_1(s)-\min_{0 \leq s \leq \tau'(\Omega)}
\beta_1(s)\big)$. This gives
\begin{equation}\label{e33}
\mathcal{T}\big(\Omega_L-\mathfrak{B}(\Omega_L)\big) \geq
\left(L-\left(\max_{0 \leq s \leq \tau'(\Omega)} \beta_1(s)
-\min_{0 \leq s\le \tau'(\Omega)} \beta_1(s)\right)\right) \mathcal{T}'(\Omega)
- \frac{8}{\pi^{1/2}}\int_{[0,\infty)}dt\ t^{1/2}Q_{\Omega}'(t).
\end{equation}
With obvious abbreviations, by the independence of the Brownian motions $B_1$ and $B'$, we
have that $\mathbb{E}_{(y_1,y')}= \mathbb{E}_{y_1}\otimes \mathbb{E}_{y'}$. For the expected
range of one-dimensional Brownian motion it is known that (see, for example, \cite{DSY})
\begin{equation}\label{e34}
\mathbb{E}_{y_1}\left[\max_{0 \leq s\leq \tau'(\Omega)}\beta_1(s)
-\min_{0 \leq s \leq \tau'(\Omega)}\beta_1(s)\right]
= \frac{4\tau'(\Omega)^{1/2}}{\pi^{1/2}}.
\end{equation}
Furthermore,
\begin{align}\label{e35}
\mathbb{E}_{y'}\big[\tau'(\Omega)^{1/2}\big]
&= \int_{[0,\infty)} d\tau\,\tau^{1/2}\,\mathbb{P}_{y'}\big(\tau'(\Omega)\in d\tau\big)
= -\int_{[0,\infty)} d\tau\,\tau^{1/2}\bigg(\frac{d}{d\tau}\mathbb{P}_{y'}\big(\tau'(\Omega)>\tau\big)\bigg)
\nonumber \\
&= \frac12\int_{[0,\infty)} d\tau\,\tau^{-1/2}\, \mathbb{P}_{y'}\big(\tau'(\Omega)>\tau\big)
= \frac12 \int_{[0,\infty)}d\tau\,\tau^{-1/2}\int_{\Omega} dz'\,p_{\Omega}'(y',z';\tau).
\end{align}
Therefore, by \eqref{e14} and Tonelli's theorem,
\begin{equation}\label{e36}
\int_{\Omega}dy'\, \mathbb{E}_{y'}\big[\tau'(\Omega)^{1/2}\big]
= \frac12 \int_{[0,\infty)} d\tau\,\tau^{-1/2}Q_{\Omega}'(\tau).
\end{equation}
So with $|\Omega_L|/L = \mathcal{H}^2(\Omega)$,
\begin{equation}\label{e37}
\frac{1}{|\Omega_L|}\int_{\Omega_L} dy\, \mathbb{E}_{y}\big[\tau'(\Omega)^{1/2}\big]
= \frac{1}{2\mathcal{H}^2(\Omega)} \int_{[0,\infty)} d\tau\ \tau^{-1/2} Q_{\Omega}'(\tau).
\end{equation}
Combining \eqref{e9}, \eqref{e30}, \eqref{e33} and \eqref{e37}, we obtain
\begin{equation}\label{e38}
\mathfrak{T}(\Omega_L) \leq \frac{8}{\pi^{1/2}}\int_{[0,\infty)}dt\ t^{1/2}Q_{\Omega}'(t)
+ \left(\frac{2}{\pi^{1/2}\mathcal{H}^2(\Omega)} \int_{[0,\infty)} d\tau\
\tau^{-1/2} Q_{\Omega}'(\tau)\right)\,\mathcal{T}'(\Omega).
\end{equation}
The second integral in the right-hand side of \eqref{e38} can be bounded from above
using \eqref{e19}. This gives that
\begin{equation}\label{e40}
\frac{2}{\pi^{1/2} \mathcal{H}^2(\Omega)} \int_{[0,\infty)} d\tau\ \tau^{-1/2}Q_{\Omega}'(\tau)
\leq \frac{2}{\pi^{1/2}} \int_{[0,\infty)} d\tau\,\tau^{-1/2}\,e^{-\tau\lambda_1'(\Omega)}
= 2\lambda_1'(\Omega)^{-1/2}.
\end{equation}
Via a calculation similar to the one in \eqref{e29}, we obtain that
\begin{equation}\label{e41}
\frac{8}{\pi^{1/2} }\int_{[0,\infty)}dt\ t^{1/2}Q_{\Omega}'(t) \leq
4\lambda_1'(\Omega)^{-1/2}\,\mathcal{T}'(\Omega).
\end{equation}
Combining \eqref{e38}, \eqref{e40} and \eqref{e41}, we arrive at \eqref{e23}.
\qed

\medskip\noindent
{\it Proof of Theorem \ref{the1.1}\textup{(iii)}.}
If we use the upper bound in \eqref{e22} instead of the upper
bound in \eqref{e30}, then we obtain that
\begin{equation*}
\mathfrak{T}(\Omega_L) \leq 4\lambda_1'(\Omega)^{-1/2}\,\mathcal{T}'(\Omega)
+ 8L^{-1}\lambda_1'(\Omega)^{-1}\,\mathcal{T}'(\Omega).
\end{equation*}
This in turn implies \eqref{e23a}.
\qed


\section{Key proposition}\label{sec3}

The proofs of Theorems \ref{the1.2} and \ref{the1.3} rely on the following proposition which
states formulae for the constants $c$ in \eqref{e24} and $c'$ in \eqref{e25d}, respectively.
We recall definitions \eqref{e9}, \eqref{e21ext} and \eqref{e25c}.

\begin{proposition}\label{prop1}
If $\Omega=D_R$, then
\begin{equation}\label{e43}
\lim_{L\rightarrow\infty} \mathfrak{T}(C_{L,R}) = cR^5,
\qquad
\lim_{L\rightarrow \infty} \mathfrak{C}(C_{L,R}) = c'R^5,
\qquad R>0,
\end{equation}
with
\begin{equation}\label{e43ext}
\begin{aligned}
c &= \frac{1}{\pi} \int_{D_1}dy'\,\mathbb{E}_{(0,y')} \left[\int_{C_1} dx\,
\Big(v_{C_1}(x)-v_{C_1-\mathfrak{B}(C_1)}(x)\Big)\right],\\
c' &=\mathbb{E}_{(0,0)}\left[\int_{C_1} dx\,
\Big(v_{C_1}(x)-v_{C_1-\mathfrak{B}(C_1)}(x)\Big)\right].
\end{aligned}
\end{equation}
\end{proposition}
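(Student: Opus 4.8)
The plan is to exploit the scaling properties of Brownian motion, of the torsion function, and of Newtonian capacity to reduce everything to the unit cylinder $C_1$, and then to control the tail of the cylinder (the part far from the fracture) using Theorem \ref{the1.1}(i). First I would record the elementary scaling identities: if $\Lambda\subset\R^3$ is open and $a>0$, then $v_{a\Lambda}(ax)=a^2 v_\Lambda(x)$, hence $\mathcal{T}(a\Lambda)=a^5\mathcal{T}(\Lambda)$; moreover, if $\beta$ is Brownian motion started at $y$ then $a\beta(\cdot/a^2)$ is Brownian motion started at $ay$, and the exit time from $a\Lambda$ of the latter is $a^2$ times the exit time from $\Lambda$ of the former, so that $\mathfrak{B}(a\Lambda)$ started at $ay$ has the same law as $a\,\mathfrak{B}(\Lambda)$ started at $y$. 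Combining these, $\mathcal{T}(a\Lambda-\mathfrak{B}(a\Lambda))$ started at $ay$ has the same law as $a^5\,\mathcal{T}(\Lambda-\mathfrak{B}(\Lambda))$ started at $y$. Taking $a=R$ and $\Lambda=C_{L/R,1}$ gives, after changing variables in the $y$-integral in \eqref{e9} and \eqref{e25c},
\begin{equation*}
\mathfrak{T}(C_{L,R}) = R^5\,\mathfrak{T}(C_{L/R,1}),
\qquad
\mathfrak{C}(C_{L,R}) = R^5\,\mathfrak{C}(C_{L/R,1}),
\end{equation*}
so it suffices to prove that $\lim_{\ell\to\infty}\mathfrak{T}(C_{\ell,1})$ and $\lim_{\ell\to\infty}\mathfrak{C}(C_{\ell,1})$ exist and equal the right-hand sides of \eqref{e43ext} (with $R=1$).

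Next I would analyse $\mathfrak{T}(C_{\ell,1})$ directly. Writing $x=(x_1,x')$ and $y=(y_1,y')$, the crucial point is that the fracture $\mathfrak{B}(C_{\ell,1})$ is confined to a bounded $x_1$-window around the starting point: since $\tau(C_{\ell,1})\le\tau'(D_1)$, the $x_1$-extent of the path is at most the range of the one-dimensional Brownian component over $[0,\tau'(D_1)]$, which is a.s.\ finite. For $\ell$ large and a starting point with $|y_1|$ not too close to $\ell/2$, the cylinder $C_{\ell,1}-\mathfrak{B}(C_{\ell,1})$ therefore decomposes (up to the fracture) into two long sub-cylinders plus the piece near the fracture, and the loss of torsional rigidity is localised: it equals, in the limit, the loss computed in the infinite cylinder $C_1$ with the fracture started at $(0,y')$. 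Concretely, I would use translation invariance in the $x_1$-direction to recentre the starting point at $x_1=0$, then show, using the lower and upper bounds of Theorem \ref{the1.1}(i) applied to the two sub-cylinders (as was already done in the proof of Theorem \ref{the1.1}(ii), cf.\ \eqref{e33}), that
\begin{equation*}
\int_{C_{\ell,1}}dx\,\Big(v_{C_{\ell,1}}(x)-v_{C_{\ell,1}-\mathfrak{B}(C_{\ell,1})}(x)\Big)
\;\longrightarrow\;
\int_{C_1}dx\,\Big(v_{C_1}(x)-v_{C_1-\mathfrak{B}(C_1)}(x)\Big)
\end{equation*}
as $\ell\to\infty$, for a.e.\ fixed realisation of $\beta'$ and fixed $y'$, with the boundary effects (the two caps at $x_1=\pm\ell/2$) contributing $O(1)$ uniformly and hence vanishing after division by $\ell$ in the definition \eqref{e9}; the average $\frac1{|C_{\ell,1}|}\int_{C_{\ell,1}}dy=\frac{1}{\pi\ell}\int_{-\ell/2}^{\ell/2}dy_1\int_{D_1}dy'$ then converges to $\frac1\pi\int_{D_1}dy'$ applied to the limiting (translation-invariant) integrand. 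The expression for $c'$ is the same computation with $y'=0$ and no average over $y'$.

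To make the interchange of limit, expectation and $y$-integration rigorous, the main obstacle is domination: I need a bound on $\int_{C_{\ell,1}}dx\,(v_{C_{\ell,1}}(x)-v_{C_{\ell,1}-\mathfrak{B}(C_{\ell,1})}(x))$ that is integrable against $\mathbb{P}_{(y_1,y')}$ uniformly in $\ell$ and $y_1$. Here I would monotonicity-bound the loss by the torsional rigidity of the smallest sub-cylinder that is excised, or more cleanly relate it to the Newtonian capacity of the trace: removing $\mathfrak{B}(C_1)$ from $C_1$ changes the torsion function by an amount controlled by $\textup{cap}(\mathfrak{B}(C_1))$ (this is exactly the mechanism that produces the constant $\kappa$ in Theorems \ref{the1.2} and \ref{the1.3}), and $\mathbb{E}_0[\textup{cap}(\mathfrak{B}(B(0;1)))]=\kappa<\textup{cap}(B(0;1))=4\pi<\infty$. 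Monotonicity of capacity and of the torsion function under domain inclusion, together with the confinement of the fracture to a bounded window, give the required uniform integrable majorant; dominated convergence then yields \eqref{e43ext}. The one subtlety worth care is that the integrand depends on $y_1$ only through the distances to the two ends, so for $|y_1|$ within an $O(1)$ window of $\pm\ell/2$ the limiting identity fails, but this set of $y_1$ has length $O(1)$ and contributes $O(1/\ell)\to0$ to the average, which is harmless.
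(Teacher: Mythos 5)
Your scaling reduction and the upper-bound half of the argument are fine and essentially reproduce the paper's Step 1 (note, though, that passing from the finite cylinder to the infinite cylinder $C_1$ for the torsion \emph{difference} needs the monotone-limit estimate of Lemma \ref{lem4}, which you use implicitly). The genuine gap is in the lower bound, i.e., in your claim that the per-starting-point loss in $C_{\ell,1}$ converges to the loss in $C_1$ and that the ``boundary effects contribute $O(1)$ uniformly and hence vanish after division by $\ell$''. First, the $1/\ell$ normalisation in \eqref{e9} is exactly cancelled by the $y_1$-integration over an interval of length $\ell$, so an error of size $O(1)$ \emph{per starting point}, uniform in $y_1$, does not wash out in the limit; only errors supported on an $O(1)$-measure set of $y_1$ (your last remark, which handles only starts near the caps) or errors that are $o(1)$ per bulk starting point are harmless. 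Second, the device you propose to prove the per-path convergence --- Theorem \ref{the1.1}(i) applied to the two sub-cylinders on either side of the fracture, as in \eqref{e33} --- replaces the fracture by the full cross-sectional slab spanned by the $x_1$-range of the path. That overestimates the loss by an amount of order one; it is precisely why Theorem \ref{the1.1}(ii) yields only the bound $6\lambda_1'(\Omega)^{-1/2}\mathcal{T}'(\Omega)$ and not the constant $c$. So this mechanism cannot identify the limiting constant in \eqref{e43ext}.

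What is missing, concretely, is the cancellation of the $O(1)$ end-corrections between the fractured and unfractured cylinders: for a bulk starting point one must show that $\int_{C_{\ell,1}}\big(v_{C_1}-v_{C_{\ell,1}}\big)$ and $\int_{C_{\ell,1}}\big(v_{C_1-\mathfrak{B}}-v_{C_{\ell,1}-\mathfrak{B}}\big)$ converge to the \emph{same} constant $\tfrac{4}{\pi^{1/2}}\int_{[0,\infty)}dt\,t^{1/2}Q'_{D_1}(t)$, so that their difference is $o(1)$. The paper achieves this by conditioning on the event that the fracture stays at distance of order $(RL)^{1/2}$ from the ends (with sub-exponentially small failure probability, Lemma \ref{lem3} and \eqref{e43u}, plus a Cauchy--Schwarz second-moment bound in Step 7), estimating the first difference via the lower bound in \eqref{e22} (the term $B_4$, \eqref{e43xalt}), and bounding the second difference from below by enlarging the fracture to a full blocking cylinder $\hat{C}_{R,L}$ and computing the torsion of the two resulting semi-infinite cylinders explicitly (\eqref{f24}--\eqref{f25}, giving \eqref{f27}); the two $O(1)$ constants then cancel in the limit. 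Your dominated-convergence scheme needs an argument of this type (or some other proof that the per-path loss converges with $o(1)$ error) before the domination step even becomes relevant; the majorant itself (via the range of $\beta_1$ as in \eqref{e34}--\eqref{e36}, or via the capacity of the trace) is the easy part.
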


\begin{proof}
The proof for $\mathfrak{T}(C_{L,R})$ comes in 10 Steps.

\medskip\noindent
{\bf 1.}
By \eqref{e9},
\begin{align}\label{e43a}
\mathfrak{T}(C_{L,R})
&= \frac{1}{\pi R^2L}\int_{C_{L,R}}dy\, \, \mathbb{E}_y \left[\int_{C_{L,R}} dx\,
\big(v_{C_{L,R}}(x)-v_{C_{L,R}-\mathfrak{B}(C_{L,R})}(x)\big)\right].
\end{align}
We observe that $x \mapsto v_{C_{L,R}}(x)-v_{C_{L,R}-\mathfrak{B}(C_{L,R})}(x)$ is harmonic on
$C_{L,R}-\mathfrak{B}(C_{L,R})$, is non-negative, and equals $0$ for $x \in \partial C_{L,R}$. By
Lemma \ref{lem4} in Appendix \ref{app}, $N \mapsto v_{C_{N,R}}(x)-v_{C_{N,R}-\mathfrak{B}(C_{L,R})}(x)$ is
increasing on $[L,\infty)$, and bounded by $\tfrac14 R^2$ uniformly in $x$. Therefore
\begin{align}\label{e43b}
v_{C_{L,R}}(x)-v_{C_{L,R}-\mathfrak{B}(C_{L,R})}(x)
&\leq \lim_{N \rightarrow \infty} \big(v_{C_{N,R}}(x)-v_{C_{N,R}-\mathfrak{B}(C_{L,R})}(x)\big)\nonumber \\
&=\lim_{N \rightarrow \infty}v_{C_{N,R}}(x)
-\lim_{N \rightarrow \infty}v_{C_{N,R}-\mathfrak{B}(C_{N,R})}(x)\nonumber \\
& =v_{C_{R}}(x)-v_{C_{R}-\mathfrak{B}(C_{L,R})}(x)\nonumber \\ &
\leq v_{C_{R}}(x)-v_{C_{R}-\mathfrak{B}(C_{R})}(x), \quad x\in C_{L,R}-\mathfrak{B}(C_{L,R}).
\end{align}
The last inequality in \eqref{e43b} follows from the domain monotonicity of the torsion function. Inserting
\eqref{e43b} into \eqref{e43a}, we get
\begin{equation}\label{e43c}
\mathfrak{T}(C_{L,R}) \leq \frac{1}{\pi R^2L}\int_{C_{L,R}}dy\, \int_{C_{R}}dx\,\,
\mathbb{E}_y\left[\Big(v_{C_{R}}(x)-v_{C_{R}-\mathfrak{B}(C_{R})}(x)\Big)\right].
\end{equation}
Since $v_{C_R}(x)$ is independent of $x_1$, we have $v_{C_R}(x)=v_{C_R}(x-(y_1,0))$ and so
\begin{equation}\label{e43d}
\mathbb{E}_y\left[v_{C_R}(x)\right] = \mathbb{E}_{(0,y')}\left[v_{C_R}(x-(y_1,0))\right].
\end{equation}
Since the stopping time $\tau(C_R-\mathfrak{B}(C_{R}))$ is independent of $y_1$, we also see that
\begin{equation}\label{e43e}
\mathbb{E}_y\big[v_{C_{R}-\mathfrak{B}(C_{R})}(x)\big]
= \mathbb{E}_{(0,y')} \big[v_{C_R-\mathfrak{B}(C_{R})}(x-(y_1,0))\big].
\end{equation}
Combining \eqref{e43c}, \eqref{e43d} and \eqref{e43e}, we obtain
\begin{align*}
\mathfrak{T}(C_{L,R})
&\leq \frac{1}{\pi R^2L}\int_{C_{L,R}}dy\, \mathbb{E}_{(0,y')}\left[
\int_{C_{R}}dx\,\Big(v_{C_R}(x-(y_1,0))-v_{C_R-\mathfrak{B}(C_{R})}(x-(y_1,0))\Big)\right]\nonumber \\
&=\frac{1}{\pi R^2L}\int_{C_{L,R}}dy\, \mathbb{E}_{(0,y')}\left[
\int_{C_{R}}dx\,\Big(v_{C_R}(x)-v_{C_R-\mathfrak{B}(C_{R})}(x)\Big)\right]\nonumber \\
&=\frac{1}{\pi R^2}\int_{D_R}dy'\, \mathbb{E}_{(0,y')}\left[
\int_{C_{R}}dx\,\Big(v_{C_R}(x)-v_{C_R-\mathfrak{B}(C_{R})}(x)\Big)\right].
\end{align*}
We conclude that
\begin{equation*}
\limsup_{L\rightarrow\infty}\mathfrak{T}(C_{L,R}) \leq \frac{1}{\pi R^2}\int_{D_R} dy'\,
\mathbb{E}_{(0,y')}\left[ \int_{C_R}dx\,\big(v_{C_R}(x)-v_{C_R-\mathfrak{B}(C_R)}(x)\big)\right].
\end{equation*}
Scaling each of the space variables $y'$ and $x$ by a factor $R$, we gain a factor $R^5$ for the
respective integrals with respect to $y'$ and $x$. Furthermore, scaling the torsion functions
$v_{C_R}$ and $v_{C_R-\mathfrak{B}(C_R)}$, we gain a further factor $R^2$. This completes
the proof of the upper bound for $c$.

\medskip\noindent
{\bf 2.}
To obtain the lower bound for $c$, we define $\tilde{L}=\{x\in \R^3\colon\, x_1=\pm L/2\}$ and
\begin{equation*}
\tilde{C}_{L,R} = \left\{(x_1,x') \in C_R\colon\,-\frac{L}{2}+(RL)^{1/2}<x_1<\frac{L}{2}-(RL)^{1/2}\right\},
\qquad L\geq 4R.
\end{equation*}
Then, with $\textbf{1}$ denoting the indicator function, we have that
\begin{align}\label{e43i}
\mathfrak{T}(C_{L,R})
&\geq \frac{1}{\pi R^2L}\int_{\tilde{C}_{L,R}}dy\, \,
\mathbb{E}_y \left[\int_{C_{L,R}}dx\, \Big(v_{C_{L,R}}(x)
-v_{C_{L,R}-\mathfrak{B}(C_{L,R})}(x)\Big)\right]\nonumber \\
&\ge \frac{1}{\pi R^2L}\int_{\tilde{C}_{L,R}}dy\, \, \mathbb{E}_y \left[\textbf {1}_{\{\mathfrak{B}(C_{L,R})
\cap \tilde{L}=\emptyset\}}\int_{C_{L,R}}dx\, \Big(v_{C_{L,R}}(x)-v_{C_{L,R}
-\mathfrak{B}(C_{L,R})}(x)\Big)\right]\nonumber \\
&=\frac{1}{\pi R^2L}\int_{\tilde{C}_{L,R}}dy\, \, \mathbb{E}_y\left[\textbf {1}_{\{\mathfrak{B}(C_{L,R})
\cap \tilde{L}=\emptyset\}}\int_{C_{L,R}}dx\, \Big(v_{C_{L,R}}(x)-v_{C_{L,R}
-\mathfrak{B}(C_{R})}(x)\Big)\right]\nonumber \\
&=\frac{1}{\pi R^2L}\int_{\tilde{C}_{L,R}}dy\, \, \mathbb{E}_y
\left[\int_{C_{L,R}}dx\, \Big(v_{C_{L,R}}(x)-v_{C_{L,R}-\mathfrak{B}(C_{R})}(x)\Big)\right]-A_1,
\end{align}
and
\begin{align}\label{e43j}
A_1
&= \frac{1}{\pi R^2L}\int_{\tilde{C}_{L,R}} dy\, \, \mathbb{E}_y \left[\textbf {1}_{\{\mathfrak{B}(C_{L,R})
\cap \tilde{L}\ne \emptyset\}}\int_{C_{L,R}} dx\,
\Big(v_{C_{L,R}}(x)-v_{C_{L,R}-\mathfrak{B}(C_{L,R})}(x)\Big)\right] \nonumber \\
&\leq \frac{1}{\pi R^2L} \int_{\tilde{C}_{L,R}} dy\, \mathbb{E}_y \left[\textbf {1}_{\{\mathfrak{B}(C_{L,R})
\cap \tilde{L}\ne \emptyset\}}\right]\int_{C_{L,R}}dx\, v_{C_{L,R}}(x)\nonumber \\
&\leq \frac{R^2}{8}\int_{\tilde{C}_{L,R}}dy\, \mathbb{P}_y \big(\mathfrak{B}(C_{L,R})
\cap \tilde{L}\ne \emptyset\big)\nonumber \\ &
\leq \frac{\pi R^4L}{8}\sup_{y\in \tilde{C}_{L,R}}\mathbb{P}_y\big(\theta(\tilde{L}) \leq \tau(C_R)\big),
\end{align}
where
\begin{equation*}
\theta(K) = \inf\{s \geq 0\colon\, \beta(s)\in K\}
\end{equation*}
denotes the first entrance time of $K$. The penultimate inequality in \eqref{e43j} uses the two bounds
$\int_{C_{L,R}} dx\,v_{C_{L,R}}(x) \le\int_{C_{L,R}} dx\,v_{C_{R}}(x)= \tfrac18\pi R^4L$ and
$|\tilde{C}_{L,R}| \le \pi R^2L$.

\medskip\noindent
{\bf 3.}
The following lemma gives a decay estimate for the supremum in the right-hand side of \eqref{e43j}
and implies that $\lim_{L\rightarrow\infty} A_1=0$.

\begin{lemma}\label{lem3}
\begin{equation}\label{e43k}
\sup_{y\in \tilde{C}_{L,R}} \mathbb{P}_y\big(\theta(\tilde{L})
\leq \tau(C_R)\big) \leq (j_0+1)\pi^{1/2}\, e^{-j_0L^{1/2}/(2R^{1/2})},\qquad L\geq 4R.
\end{equation}
\end{lemma}

\begin{proof}
First observe that the distance of $y$ to $\tilde{L}$ is bounded from below by $(LR)^{1/2}.$ Therefore
\begin{equation}\label{e43l}
\mathbb{P}_y\big(\theta(\tilde{L})\le \tau(C_R)\big)
\leq \mathbb{P}_{(0,y')}\left(\max_{0 \leq s \leq \tau'(C_R)}|\beta_1(s)| \geq (LR)^{1/2}\right).
\end{equation}
By \cite[(6.3),Corollary 6.4]{vdBD},
\begin{equation}\label{e43m}
\mathbb{P}^{(1)}_0\left(\max_{0\leq s\leq t} |\beta_1(s)| \geq R\right) \leq 2^{3/2}e^{-R^2/(8t)}.
\end{equation}
Combining \eqref{e43l} and \eqref{e43m} with the independence of $\beta_1$ and $\beta'$, we obtain
via an integration by parts,
\begin{align}\label{e43n}
\mathbb{P}_y\big(\theta(\tilde{L})
&\le \tau(C_R)\big)\le 2^{3/2}\int_{[0,\infty)} d\tau\, \bigg(\frac{\partial}{\partial \tau}
\mathbb{P}_{y'}\big(\tau'(D_R)>\tau\big)\bigg)\,e^{-LR/(8\tau)}\nonumber \\
&=\frac{LR}{2^{3/2}}\int_{[0,\infty)}\frac{d\tau}{\tau^2}\,
\mathbb{P}_{y'}\big(\tau'(D_R)>\tau\big)\,e^{-LR/(8\tau)}.
\end{align}
By the Cauchy-Schwarz inequality, the semigroup property of the heat kernel, the eigenfunction
expansion of the heat kernel, and the domain monotonicity of the heat kernel, we have that
\begin{align}\label{e43o}
\mathbb{P}_{y'}\big(\tau'(D_R)>\tau\big)
&=\int_{D_R}dz'\, p'_{D_R}(z',y';\tau)\nonumber \\ &
\leq (\pi R^2)^{1/2}\bigg(\int_{D_R}dz'\, (p'_{D_R}(z',y';\tau))^2\bigg)^{1/2}\nonumber \\
&= (\pi R^2)^{1/2}\big(p'_{D_R}(y',y';2\tau)\big)^{1/2}\nonumber \\
&= (\pi R^2)^{1/2}\bigg(\sum_{j=1}^{\infty} e^{-2\tau\lambda'_j(D_R)}
\big(\varphi_{D_R,j}'(y')\big)^2\bigg)^{1/2}\nonumber \\
&\leq (\pi R^2)^{1/2}e^{-\tau\lambda'_1(D_R)/2}
\bigg(\sum_{j=1}^{\infty} e^{-\tau\lambda'_j(D_R)}\big(\varphi_{D_R,j}'(y')\big)^2\bigg)^{1/2}\nonumber \\
&= (\pi R^2)^{1/2}e^{-\tau\lambda'_1(D_R)/2}\big(p'_{D_R}(y',y';\tau)\big)^{1/2}\nonumber \\
&\leq (\pi R^2)^{1/2}e^{-\tau\lambda'_1(D_R)/2}\big(p'_{\R^2}(y',y';\tau)\big)^{1/2}\nonumber \\ &
= \frac{Re^{-j_0^2\tau/(2R^2)}}{(4\tau)^{1/2}}.
\end{align}
Combining \eqref{e43n} and \eqref{e43o}, and changing variables twice, we arrive at
\begin{align}\label{e43p}
\mathbb{P}_y\big(\theta(\tilde{L}) \le \tau(C_R)\big)
&\leq \frac{LR^2}{2^{5/2}}\int_{[0,\infty)}\frac{d\tau}{\tau^{5/2}}\,
e^{-j_0^2\tau/(2R^2)-LR/(8\tau)}\nonumber \\
&= \frac{j_0^{3/2}L^{1/4}}{2R^{1/4}}\int_{[0,\infty)}\frac{d\tau}{\tau^{5/2}}\,
e^{-j_0L^{1/2}(\tau+\tau^{-1})/(4R^{1/2})}\nonumber \\
&=\frac{j_0^{3/2}L^{1/4}}{R^{1/4}}\int_{[0,\infty)}\frac{d\tau}{\tau^{4}}\,
e^{-j_0L^{1/2}(\tau^2+\tau^{-2})/(4R^{1/2})}\nonumber \\&
=\pi^{1/2}j_0\bigg(1+\frac{2R^{1/2}}{j_0L^{1/2}}\bigg)e^{-j_0L^{1/2}/(2R^{1/2})}.
\end{align}
The last equality follows from \cite[3.472.4]{GR}. This proves \eqref{e43k} because $L\geq 4R$.
\end{proof}

\medskip\noindent
{\bf 4.}
We write the double integral in the right-hand side of \eqref{e43i} as $B_1+B_2$, where
\begin{equation}\label{e43q}
B_1=\frac{1}{\pi R^2L}\int_{\tilde{C}_{L,R}}dy\, \,
\mathbb{E}_y \left[{\textbf{1}}_{\{\mathfrak{B}(C_{R}) \cap \hat{L}=\emptyset\}}
\int_{C_{L,R}}dx\, \Big(v_{C_{L,R}}(x)-v_{C_{L,R}-\mathfrak{B}(C_{R})}(x)\Big)\right],
\end{equation}
\begin{equation*}
B_2=\frac{1}{\pi R^2L}\int_{\tilde{C}_{L,R}}dy\, \,
\mathbb{E}_y \left[{\textbf{1}}_{\{\mathfrak{B}(C_{L,R})\cap \hat{L}\ne\emptyset\}}
\int_{C_{L,R}}dx\, \Big(v_{C_{L,R}}(x)-v_{C_{L,R}-\mathfrak{B}(C_{R})}(x)\Big)\right],
\end{equation*}
with
\begin{equation*}
\hat{L}=\pm\frac{L}{2}\mp\frac{(RL)^{1/2}}{2}.
\end{equation*}
We have that
\begin{align}\label{e43t}
B_2&\le\frac{1}{\pi R^2L}\int_{\tilde{C}_{L,R}}dy\,
\mathbb{P}_y\big(\mathfrak{B}(C_{R})\cap\hat{L}\ne \emptyset\big)
\int_{C_{L,R}}dx\, v_{C_{R}}(x)\nonumber \\
&\leq \frac{\pi R^4L}{8}\sup_{y\in \tilde{C}_{L,R}}
\mathbb{P}_y\big(\tau({\hat L})\le \tau(C_R)\big).
\end{align}
The distance from any $y\in \tilde{C}_{L,R}$ to $\hat{L}$ is bounded from below by $(RL)^{1/2}/8$.
Following the argument leading from \eqref{e43n} to \eqref{e43p} with $(RL/4)^{1/2}$ replacing
$(RL)^{1/2}$, we find that
\begin{equation}\label{e43u}
\mathbb{P}_y\big(\tau({\hat L}) \leq \tau(C_R)\big)\le \pi^{1/2}j_0\bigg(1+\frac{4R^{1/2}}{j_0L^{1/2}}\bigg)
e^{-j_0L^{1/2}/(4R^{1/2})}.
\end{equation}
This, together with \eqref{e43t}, shows that $\lim_{L\rightarrow\infty}B_2=0$. It remains to obtain the
asymptotic behaviour of $B_1$.

\medskip\noindent
{\bf 5.}
We write $B_1=B_3+B_4+B_5$, where
\begin{equation*}
B_3=\frac{1}{\pi R^2L}\int_{\tilde{C}_{L,R}}dy\, \,
\mathbb{E}_y \left[{\textbf{1}}_{\{\mathfrak{B}(C_{R})\cap \hat{L}=\emptyset\}}
\int_{C_{L,R}}dx\, \Big(v_{C_{R}}(x)-v_{C_{R}-\mathfrak{B}(C_{R})}(x)\Big)\right],
\end{equation*}
\begin{equation*}
B_4=\frac{1}{\pi R^2L}\int_{\tilde{C}_{L,R}}dy\, \,
\mathbb{E}_y \left[{\textbf{1}}_{\{\mathfrak{B}(C_{R})\cap \hat{L}=\emptyset\}}
\int_{C_{L,R}}dx\, \Big(v_{C_{L,R}}(x)-v_{C_{R}}(x)\Big)\right],
\end{equation*}
\begin{equation}\label{e43x}
B_5=\frac{1}{\pi R^2L}\int_{\tilde{C}_{L,R}}dy\, \,
\mathbb{E}_y \left[{\textbf{1}}_{\{\mathfrak{B}(C_{R})\cap \hat{L}=\emptyset\}}
\int_{C_{L,R}}dx\, \Big(v_{C_{R}-\mathfrak{B}(C_R)}(x)-v_{C_{L,R}-\mathfrak{B}(C_R)}(x)\Big)\right].
\end{equation}
We have that
\begin{align}\label{e43xalt}
B_4
&=\frac{1}{\pi R^2L}\int_{\tilde{C}_{L,R}}dy\, \,
\mathbb{P}_y \big(\{\mathfrak{B}(C_{R})\cap \hat{L}=\emptyset\}\big)
\int_{C_{L,R}}dx\, \Big(v_{C_{L,R}}(x)-v_{C_{R}}(x)\Big)\nonumber\\
&\ge \frac{1}{\pi R^2L}\int_{\tilde{C}_{L,R}}dy\,
\big(\mathcal{T}(C_{L,R})-L\mathcal{T}'(D_R)\big)\nonumber \\&
\ge -\frac{4}{\pi^{1/2}}\int_{[0,\infty)}dt\ t^{1/2}Q_{D_R}'(t),
\end{align}
where we have used the lower bound in \eqref{e22} for $\Omega=D_R$. Furthermore,
\begin{equation}\label{e43y}
B_3=\frac{1}{\pi R^2L}\int_{\tilde{C}_{L,R}}dy\, \,
\mathbb{E}_y \left[\int_{C_{R}}dx\,
\Big(v_{C_{R}}(x)-v_{C_{R}-\mathfrak{B}(C_{R})}(x)\Big)\right]-A_2-A_3,
\end{equation}
where
\begin{align*}
A_2
&=\frac{1}{\pi R^2L}\int_{\tilde{C}_{L,R}}dy\, \,
\mathbb{E}_y \left[{\textbf{1}}_{\{\mathfrak{B}(C_{R})\cap \hat{L}=\emptyset\}}
\int_{C_{R}-C_{L,R}}dx\, \Big(v_{C_{R}}(x)-v_{C_{R}-\mathfrak{B}(C_{R})}(x)\Big)\right],
\end{align*}
\begin{equation}\label{f1}
A_3=\frac{1}{\pi R^2L}\int_{\tilde{C}_{L,R}}dy\, \,
\mathbb{E}_y \left[{\textbf{1}}_{\{\mathfrak{B}(C_{R})\cap \hat{L}\ne\emptyset\}}
\int_{C_{R}}dx\, \Big(v_{C_{R}}(x)-v_{C_{R}-\mathfrak{B}(C_{R})}(x)\Big)\right].
\end{equation}

\medskip\noindent
{\bf 6.}
To bound $A_2$ we note that $x \mapsto v_{C_{R}}(x)-v_{C_{R}
-\mathfrak{B}(C_{R})}(x)$ is harmonic on $C_{R}-\mathfrak{B}(C_{R})$, equals $0$ for $x \in
\partial C_R$, and equals $\tfrac14(R^2-|x'|^2)$ for $x\in \mathfrak{B}(C_{R})$. Therefore
\begin{equation*}
v_{C_{R}}(x)-v_{C_{R}-\mathfrak{B}(C_{R})}(x)
\leq \frac{R^2}{4}\bar{\mathbb{P}}_x\Big(\bar{\tau}(\mathfrak{B}(C_{R}))\le \bar{\tau}(C_R)\Big).
\end{equation*}
On the set $\{\mathfrak{B}(C_{R})\cap \hat{L}=\emptyset\}$ we have that $\bar{\tau}(\hat{L})\le
\bar{\tau}(\mathfrak{B}(C_{R}))$. Hence
\begin{align}\label{f3}
A_2&\le \frac{1}{4\pi L}\int_{\tilde{C}_{L,R}}dy\, \,
\mathbb{E}_y \left[{1}_{\{\mathfrak{B}(C_{R})\cap \hat{L}=\emptyset\}}
\int_{C_{R}-C_{L,R}}dx\,\bar{\mathbb{P}}_x\big(\bar{\tau}(\hat{L})\le \bar{\tau}(C_R)\big)\right]\nonumber\\
&\le \frac{1}{4\pi L}\int_{\tilde{C}_{L,R}}dy\, \,
\mathbb{E}_y \left[\int_{C_{R}-C_{L,R}}dx\,\bar{\mathbb{P}}_x\big(\bar{\tau}(\hat{L})\le \bar{\tau}(C_R)\big)\right]\nonumber\\
&=\frac{R^2}{4}\bigg(1-\frac{2R^{1/2}}{L^{1/2}}\bigg)\int_{C_{R}-C_{L,R}}dx\,
\bar{\mathbb{P}}_x\big(\bar{\tau}(\hat{L})\le \bar{\tau}(C_R)\big).
\end{align}
Recall that $\bar{\tau}(\hat{L})$ equals the first hitting time of $\hat{L}$ by $\bar{\beta}_1$, and that $\bar{\tau}(C_R)$
is the first exit time of $D_R$ by $\bar{\beta}'$. Furthermore, for $x\in C_R-C_{L,R}$ the distance from $x$
to $\hat{L}$ is equal to $(RL/4)^{1/2}+x_1$. By \eqref{e43o},
\begin{equation*}
\bar{\mathbb{P}}_{x'}\big(\bar{\tau}'(D_R)>\tau\big)\le \frac{R\,e^{-j_0^2\tau/(2R^2)}}{(4\tau)^{1/2}}.
\end{equation*}
It is well known that
\begin{align*}
\bar{\mathbb{P}}_0^{(1)}\left(\max_{0\le s\le \tau}\bar{\beta}_1(s)>R\right)
=(\pi \tau)^{-1/2}\int_{[R,\infty)}d\xi\, e^{-\xi^2/(4\tau)}
\le 2^{1/2}\,e^{-R^2/(8\tau)}.
\end{align*}
Hence
\begin{equation*}
\bar{\mathbb{P}}_0^{(1)}\left(\max_{0\le s\le \tau}\bar{\beta}_1(s)>(RL/4)^{1/2}+x_1\right)
\leq 2^{1/2}e^{-(RL+4x_1^2)/(32\tau)}.
\end{equation*}
By the independence of $\bar{\beta}_1$ and $\bar{\beta}'$ we have, similarly to \eqref{e43n},
\begin{align*}
\bar{\mathbb{P}}_x\big(\bar{\tau}(\hat{L})\le \bar{\tau}(C_R)\big)
&\le 2^{1/2}\int_{[0,\infty)} d\tau\, \bigg(\frac{\partial}{\partial \tau}
\bar{\mathbb{P}}_{x'}\big(\bar{\tau}'(D_R)>\tau\big)\bigg)e^{-(RL+4x_1^2)/(32\tau)}\nonumber \\
&\le \frac{R(RL+4x_1^2)}{2^{11/2}}\int_{[0,\infty)} \frac{d\tau}{\tau^{5/2}}\,
e^{-j_0^2\tau/(2R^2)-(RL+4x_1^2)/(32\tau)}\nonumber\\
&= \frac{R(RL+4x_1^2)}{2^{11/2}} \int_{[0,\infty)} d\tau\, \tau^{1/2}\,
e^{-j_0^2/(2R^2\tau)-(RL+4x_1^2)\tau/32}\nonumber\\
&= \frac{R(RL+4x_1^2)}{2^{9/2}} \int_{[0,\infty)} d\tau\, \tau^2\,
e^{-j_0^2/(2R^2\tau^2)-(RL+4x_1^2)\tau^2/32}\nonumber\\
&= 2\pi^{1/2}R(RL+4x_1^2)^{-1/2}\left(1+\frac{j_0(RL+4x_1^2)^{1/2}}{4R}\right)\,
e^{-j_0(RL+4x_1^2)^{1/2}/(4R)}\nonumber \\
&\le 2\pi^{1/2}\bigg(\frac{R^{1/2}}{L^{1/2}}+\frac{j_0}{4}\bigg)\,
e^{-(j_0^2L/(32R))^{1/2}-(j_0x_1^2/(32R^2))^{1/2}},
\end{align*}
where we have used \cite[3.472.2]{GR}. Integration of the above over $x \in C_R-C_{L,R},$
together with \eqref{f3}, gives
\begin{equation}\label{f8}
A_2=O\big(e^{-(L/(6R))^{1/2}}\big),\qquad L\rightarrow\infty.
\end{equation}

\medskip\noindent
{\bf 7.}
To bound $A_3$ in \eqref{f1}, we use the Cauchy-Schwarz inequality to estimate
\begin{equation}\label{f9}
A_3 \leq \frac{1}{\pi R^2L}\int_{\tilde{C}_{L,R}}dy\, \,
\bigg(\mathbb{P}_y\big(\theta(\hat{L}) \leq
\tau(C_R)\big)\bigg)^{1/2}\bigg(\mathbb{E}_y\left[\int_{C_{R}}dx\,
\Big(v_{C_{R}}(x)-v_{C_{R}-\mathfrak{B}(C_{R})}(x)\Big)\right]^2\bigg)^{1/2}.
\end{equation}
The probability in \eqref{f9} decays sub-exponentially fast in $(L/R)^{1/2}$ by \eqref{e43u}. Hence
it remains to show that the expectation in \eqref{f9} is finite. Define
\begin{equation*}
\hat{\mathfrak{B}}(C_R) = \left\{x\in C_R\colon\,\min_{0\le s\le \tau(C_R)}\beta_1(s)<x_1
<\max_{0\le s\le \tau(C_R)}\beta_1(s)\right\}.
\end{equation*}
Then $\mathfrak{B}(C_R)\subset\hat{\mathfrak{B}}(C_R)$, and
\begin{align*}
\mathbb{E}_y\left[\int_{C_{R}}dx\, \Big(v_{C_{R}}(x)-v_{C_{R}-\mathfrak{B}(C_{R})}(x)\Big)\right]^2
\leq\mathbb{E}_y\left[\int_{C_{R}}dx\,
\Big(v_{C_{R}}(x)-v_{C_{R}-\hat{\mathfrak{B}}(C_{R})}(x)\Big)\right]^2.
\end{align*}
For  $x\in \hat{\mathfrak{B}}(C_{R})$ we have $v_{C_R}(x)\le R^2/4$ and $v_{C_R-\hat{\mathfrak{B}}
(C_R)}(x)=0$. Furthermore,
\begin{equation*}
v_{C_{R}}(x)-v_{C_{R}-\mathfrak{B}(C_{R})}(x)
\le \frac{R^2}{4}\bar{\mathbb{P}}_x\big(\bar{\tau}(\hat{\mathfrak{B}}(C_{R}))
\le \bar{\tau}(C_R)\big),\qquad x\in C_R-\hat{\mathfrak{B}}(C_R),
\end{equation*}
and hence
\begin{align}\label{f13}
\mathbb{E}_y
&\left[\int_{C_{R}}dx\, \Big(v_{C_{R}}(x)-v_{C_{R}-\hat{\mathfrak{B}}(C_{R})}(x)\Big)\right]^2\nonumber\\
&\le \frac{R^4}{8}\mathbb{E}_y\left[|\hat{\mathfrak{B}}(C_{R})|^2
+\bigg(\int_{C_R-\hat{\mathfrak{B}}(C_{R})}dx\,
\bar{\mathbb{P}}_x\Big(\bar{\tau}(\hat{\mathfrak{B}}(C_{R}))\le \bar{\tau}(C_R)\Big)\bigg)^2\right].
\end{align}
The probability distribution of the range of one-dimensional Brownian motion is known (see, for
example, \cite[Eq. (19)]{DSY}). This gives
\begin{equation}\label{f14}
\mathbb{E}_{y'}\left[\max_{0\le s\le \tau'(D_R)}\beta_1(s)-\min_{0\le s\le \tau'(D_R)}
\beta_1(s)\right]^2 = \frac{64\log 2}{\pi^{1/2}}\,\tau'(D_R).
\end{equation}
By a calculation similar to \eqref{e35} and \eqref{e36}, we see that
\begin{align*}
\mathbb{E}_{y'}\left[\max_{0\le s\le \tau'(D_R)}
\beta_1(s)-\min_{0\le s\le \tau'(D_R)}\beta_1(s)\right]^2
&=\frac{64\log 2}{\pi^{1/2}}\int_{[0,\infty)} d\tau\, \int_{D_R}dz'\, p'_{D_R}(y',z';\tau)\nonumber\\
&=\frac{64\log 2}{\pi^{1/2}}\,v'_{D_R}(y') \leq \frac{16\log 2}{\pi^{1/2}}R^2.
\end{align*}
Together with \eqref{f14}, this yields
\begin{equation*}
\mathbb{E}_y\big(|\hat{\mathfrak{B}}(C_{R})|^2\big)\le 16\pi^{3/2}(\log 2)R^6,
\end{equation*}
which gives us control over the first term in the right-hand side of \eqref{f13}. To estimate the
second term in the right-hand side of \eqref{f13}, we note that the set $C_R-\hat{\mathfrak{B}}(C_R)$
consists of two semi-infinite cylinders. It is instructive to calculate this term explicitly. To simplify
notation, we define $C_R^+=\{x\in\R^3\colon\, x_1>0, |x'|<R\},\,Z_R=\{x\in \R^3\colon\,x_1
=0, |x'|\le R\},$ and $\vartheta(Z_R)=\inf\{s\ge 0\colon \bar{\beta}(s)\in Z_R\}$. Then, by separation of variables and integration
by parts, we get
\begin{align}\label{f17}
\bar{\mathbb{P}}_x\big(\vartheta(Z_R)\le \bar{\tau}(C_R^+)\big)
&=\int_{[0,\infty)} \bar{\mathbb{P}}_{x'}\big(\bar{\tau}'(D_R)\in d\tau\big)\bar{\mathbb{P}}_{x_1}\big(\vartheta(Z_R)\le \tau\big)\nonumber \\
&= \int_{[0,\infty)} \bar{\mathbb{P}}_{x'}\big(\bar{\tau}'(D_R)\in d\tau\big)\,\frac{2}{\pi^{1/2}}
\int_{[x_1/(2\tau^{1/2}),\infty)}d\xi\,e^{-\xi^2}\nonumber \\
&= \int_{[0,\infty)} d\tau\, \bar{\mathbb{P}}_{x'}\big(\bar{\tau}'(D_R)>\tau\big)\frac{2x_1}{\pi\tau^{3/2}}
\,e^{-x_1^2/(4\tau)}.
\end{align}
Integrating \eqref{f17} with respect to
$x_1 \in \R^+$, we find that
\begin{equation}\label{f18}
\int_{\R^+} dx_1 \bar{\mathbb{P}}_x\big(\vartheta(Z_R)\le \bar{\tau}(C_R^+)\big)
=\frac{4}{\pi^{1/2}} \int_{[0,\infty)} d\tau\,\tau^{-1/2}\,\bar{\mathbb{P}}_{x'}\big(\bar{\tau}'(D_R)>\tau\big).
\end{equation}
Subsequently integrating both sides of \eqref{f18} over $x' \in D_R$, we get
\begin{equation*}
\int_{C_R^+} dx\, \bar{\mathbb{P}}_x\big(\vartheta(Z_R)\le \bar{\tau}(C_R^+)\big)
=\frac{4}{\pi^{1/2}}\int_{[0,\infty)} d\tau\, \tau^{-1/2}Q'_{D_R}(\tau).
\end{equation*}
It follows that
\begin{equation}\label{f20}
\bigg(\int_{C_R-\hat{\mathfrak{B}}(C_{R})}dx\, \bar{\mathbb{P}}_x\big(\bar{\tau}(\hat{\mathfrak{B}}(C_{R}))
\le \bar{\tau}(C_R)\big)\bigg)^2 = \frac{64}{\pi}\bigg(\int_{[0,\infty)}d\tau\, \tau^{-1/2}Q'_{D_R}(\tau)\bigg)^2.
\end{equation}
The integral over $\tau$ in \eqref{f20} is finite by \eqref{e40}.
We conclude that, by \eqref{e43u},
\begin{align}\label{f21}
A_3
&\le \bigg(\mathbb{P}_y\big(\theta(\hat{L})\le \tau(C_R)\big)\bigg)^{1/2}
\bigg(2\pi^{3/2}(\log 2)R^{10} + \frac{8}{\pi}R^4\bigg(\int_{[0,\infty)}
d\tau\,\tau^{-1/2}Q'_{D_R}(\tau)\bigg)^2\bigg)^{1/2}\nonumber\\
&=O\big(e^{-j_0L^{1/2}/(4R^{1/2})}\big),\qquad L\rightarrow\infty.
\end{align}

\medskip\noindent
{\bf 8.}
The integrand in \eqref{e43y} is independent of $y_1$. Since $\lim_{L\rightarrow\infty}
(L-2(RL)^{1/2})/L=1$, we have by \eqref{e43y}, \eqref{f8} and \eqref{f21} that
\begin{equation}\label{f22}
\liminf_{L\rightarrow\infty} B_3 \geq \frac{1}{\pi R^2}\int_{D_R}dy'\,
\mathbb{E}_{(0,y')}\left[ \int_{C_R}dx\,\Big(v_{C_R}(x)-v_{C_R-\mathfrak{B}(C_R)}(x)\Big)\right].
\end{equation}

\medskip\noindent
{\bf 9.}
It remains to obtain a lower bound on $B_5$ in \eqref{e43x} as $L\rightarrow\infty$. The
integrand with respect to $x$ is a non-negative harmonic function, which can be bounded from
below by enlarging the set $\mathfrak{B}(C_R)$ to $\hat{C}_{R,L}:= \{D_R\times [-\frac{L}{2}+\frac12(RL)^{1/2},
\frac{L}{2}-\frac12(RL)^{1/2}] \}.$ Hence
\begin{align}\label{f23}
B_5
&\geq \frac{1}{\pi R^2L}\int_{\tilde{C}_{L,R}}dy\, \,
\mathbb{E}_y \left[{\textbf{1}}_{\{\mathfrak{B}(C_{R})\cap \hat{L}=\emptyset\}}
\int_{C_{L,R}}dx\, \Big(v_{C_{R}-\hat{C}_{R,L}}(x)-v_{C_{L,R}-\hat{C}_{R,L}}(x)\Big)\right]\nonumber\\
&=\frac{1}{\pi R^2L}\int_{\tilde{C}_{L,R}}dy\,\mathbb{P}_y \big(\mathfrak{B}(C_{R})
\cap \hat{L}=\emptyset\big)\int_{C_{L,R}-\hat{C}_{R,L}}dx\,
\Big(v_{C_{R}-\hat{C}_{R,L}}(x)-v_{C_{L,R}-\hat{C}_{R,L}}(x)\Big).
\end{align}
The set $C_{L,R}-\hat{C}_{R,L}$ consists of two cylinders with cross-section $D_R$ and length
$(RL)^{1/2}/2$ each. Hence, by Theorem \ref{the1.1}(i), we have
\begin{equation}\label{f24}
\int_{C_{L,R}-\hat{C}_{R,L}}dx\,v_{C_{L,R}-\hat{C}_{R,L}}(x)
= \mathcal{T}'(D_R)(RL)^{1/2}-\frac{8}{\pi^{1/2}}\int_{[0,\infty)}dt\ t^{1/2} Q_{D_R}'(t)+O(L^{-1/2}).
\end{equation}
The set $C_R-\hat{C}_{R,L}$ consists of two semi-infinite cylinders, and we integrate the torsion function
for that set over two cylinders of length $(RL)^{1/2}/2$, each near their base. Adopting previous
notation, we get
\begin{align}\label{f25}
&\int_{C_{L,R}-\hat{C}_{R,L}}dx\,v_{C_{R}-\hat{C}_{R,L}}(x)
= 2\int_{[0,(RL)^{1/2}/2)}dx_1 \int_{D_R}dx'v_{C_R^+}(x)\nonumber \\
&= 2\int_{[0,\infty)}dt\int_{[0,(RL)^{1/2}/2)}dx_1 \int_{D_R}dx'\int_{[0,\infty)}
dx_1 \int_{D_R}dy'\int_{[0,\infty)} dy_1\,p_{D_R}'(x',y';t)p_{\R^+}(x_1,y_1;t)\nonumber \\
&=2\int_{[0,\infty)}dt\int_{[0,(RL)^{1/2}/2)} dx_1\, u_{\R^+}(x_1;t)  Q'_{D_R}(t)\nonumber\\
&=2\int_{[0,\infty)}dt\int_{[0,(RL)^{1/2}/2)} dx_1\, \bigg(1-\frac{2}{\pi^{1/2}}
\int_{[x_1/(4t)^{1/2},\infty)} d\xi\, e^{-\xi^2}\bigg)  Q'_{D_R}(t)\nonumber \\
&=\mathcal{T}'(D_R)(RL)^{1/2}-\frac{4}{\pi^{1/2}} \int_{[0,\infty)}dt\,Q'_{D_R}(t)\,
\int_{[0,(RL)^{1/2}/2)}dx_1\int_{[x_1/(4t)^{1/2},\infty)} d\xi\, e^{-\xi^2} \nonumber \\
&\ge \mathcal{T}'(D_R)(RL)^{1/2}-\frac{4}{\pi^{1/2}}\int_{[0,\infty)} dt\,Q'_{D_R}(t)\,
\int_{[0,\infty)}dx_1\int_{[x_1/(4t)^{1/2},\infty)}d\xi\, e^{-\xi^2}\nonumber \\
&=\mathcal{T}'(D_R)(RL)^{1/2}-\frac{4}{\pi^{1/2}}\int_{[0,\infty)}dt\, t^{1/2}\,Q'_{D_R}(t).
\end{align}
Combining \eqref{f23}, \eqref{f24} and \eqref{f25}, we arrive at
\begin{equation*}
B_5 \ge \frac{1}{\pi R^2L}\int_{\tilde{C}_{L,R}}dy\,
\mathbb{P}_y \big(\mathfrak{B}(C_{R})\cap \hat{L}=\emptyset\big)
\bigg(\frac{4}{\pi^{1/2}}\int_{[0,\infty)}dt\, t^{1/2}\,Q'_{D_R}(t)+O(L^{-1/2})\bigg).
\end{equation*}
We conclude that
\begin{equation}\label{f27}
\liminf_{L\rightarrow\infty}B_5 \geq \frac{4}{\pi^{1/2}}\int_{[0,\infty)} dt\, t^{1/2}\,Q'_{D_R}(t).
\end{equation}

\medskip\noindent
{\bf 10.}
From \eqref{e43xalt}, \eqref{f22} and \eqref{f27}, we get
\begin{equation*}
\liminf_{L\rightarrow\infty}(B_3+B_4+B_5) \geq \frac{1}{\pi R^2}\int_{D_R}dy'\,
\mathbb{E}_{(0,y')}\left[ \int_{C_R}dx\,\Big(v_{C_R}(x)-v_{C_R-\mathfrak{B}(C_R)}(x)\Big)\right].
\end{equation*}
Scaling each the space variables $y'$ and $x$ by a factor $R$, we gain a factor $R^5$ for the
respective integrals with respect to $y'$ and $x$. Furthermore, scaling the torsion functions
$v_{C_R}$ and $v_{C_R-\mathfrak{B}(C_R)}$, we gain a further factor $R^2$. Hence
\begin{align*}
&\frac{1}{\pi R^2}\int_{D_R}dy'\,\mathbb{E}_{(0,y')}\left[
\int_{C_R}dx\,\Big(v_{C_R}(x)-v_{C_R-\mathfrak {B}(C_R)}(x)\Big)\right]\nonumber \\
&\qquad =\frac{1}{\pi}R^5\int_{D_1}dy'\,\mathbb{E}_{(0,y')}\left[
\int_{C_1}dx\,\Big(v_{C_1}(x)-v_{C_1-\mathfrak {B}(C_1)}(x)\Big)\right],
\end{align*}
which is the required first formula in \eqref{e43}.
\end{proof}

The main modification for the proof for $\mathfrak{C}(C_{L,R})$ in the second formula of \eqref{e43}
is that no averaging takes place over the cross-section $D_R$ as $y'=0$ is fixed. Hence the absence
of the factor $\frac{1}{\pi}$ and the integral with respect to $y'$ over $D_1$ in the formula for $c'$ in \eqref{e43ext}.


\section{Proofs of Theorems \ref{the1.2} and \ref{the1.3}}\label{sec4}

The proofs of Theorems \ref{the1.2} and \ref{the1.3} are given in Section \ref{subsec4.1}
and \ref{subsec4.2}, respectively, and rely on Proposition \ref{prop1}.


\subsection{Proof of Theorem \ref{the1.2}}\label{subsec4.1}

To prove the upper bound we note that $\lambda_1'(D_R)=j_0^2/R^2$ and $\mathcal{T}'(D_R)= \pi R^4/8$ (see \cite{vdBBB}).
This gives the upper bound $\pi R^5/2j_0$ for the right-hand side of \eqref{e24}, which implies
the upper bound for $c$ in \eqref{e25a}.

To prove the lower bound we start from \eqref{e43ext}. Let $a\in (0,\frac14)$. We have the following estimate:
\begin{align}\label{e45}
&c = \frac{1}{\pi}\int_{D_1}dy'\,\mathbb{E}_{(0,y')}\left[
\int_{C_1} dx\,\Big(v_{C_1}(x)-v_{C_1-\mathfrak {B}(C_1)}(x)\Big)\right]\nonumber \\
&\qquad\geq \frac{1}{\pi}\int_{D_a}dy'\, \mathbb{E}_{(0,y')}\left[\int_{C_1}dx\,
\Big(v_{C_1}(x)-v_{C_1-\mathfrak {B}(C_1)}(x)\Big)\right]\nonumber \\
&\qquad \geq \frac{1}{\pi}\int_{D_a}dy'\,
\mathbb{E}_{(0,y')}\left[\int_{\{x\in\R^3\colon |x-\beta(0)|<a\}}dx\,
\Big(v_{C_1}(x)-v_{C_1-\mathfrak {B}(B(\beta(0);a))}(x)\Big)\right],
\end{align}
where we have used that $\mathfrak{B}(C_1)\supset \mathfrak {B}(B((0,y');a))$. To estimate the
second integral, we consider a fixed compact set $K\subset B((0,y');a) \subset \R^3$ and
derive a lower bound for $v_{C_1}(x)-v_{C_1-K}(x)$ uniformly in $|y'| \leq a$ and $|x-(0,y')|
\leq a$.

First note that $x \mapsto v_{C_1}(x)-v_{C_1-K}(x)$ is harmonic on $C_1-K$, equals $0$
for $x \in \partial C_1$, and equals $\tfrac14(1-|x'|^2)$ for $x\in K$. If $|y'|<a$, then $|x'|<2a$,
$x\in K$. Hence $v_{C_1}(x)-v_{C_1-K}(x) \geq \tfrac14(1-4a^2)$ for $x \in K$. We therefore
have
\begin{equation}\label{e48}
v_{C_1}(x)-v_{C_1-K}(x)\ge \frac{1-4a^2}{4}\,\bar{\mathbb{P}}_x\big(\bar{\tau}_{\R^3-K}<\bar{\tau}(C_1)\big),
\qquad x \in C_1.
\end{equation}
By the strong Markov property, we have
\begin{align}\label{e49}
\bar{\mathbb{P}}_x\big(\bar\tau_{\R^3-K}<\bar\tau(C_1)\big)
&=\bar{\mathbb{P}}_x\big(\bar\tau_{\R^3-K}<\infty\big)-\bar{\mathbb{P}}_x\big(\bar\tau(C_1)\le\bar\tau_{\R^3-K}<\infty\big)\nonumber \\
&\ge \inf_{\{|x-(0,y')|<a\}}\bar{\mathbb{P}}_x\big(\bar\tau_{\R^3-K}<\infty\big)-\sup_{x\in \partial C_1}
\bar{\mathbb{P}}_x\big(\bar\tau_{\R^3-K}<\infty\big).
\end{align}
Let $\mu_{K}$ denote the equilibrium measure for $K$. Then (see \cite{PS})
\begin{equation}\label{e50}
\bar{\mathbb{P}}_x\big(\bar\tau_{\R^3-K}<\infty\big) = \int_K \mu_{K}(dz)\,\frac{1}{4\pi|x-z|}, \qquad x \in K.
\end{equation}
If $z\in K$ and $|x-(0,y')| \le a$, then $|x-z|\le 2a$. Hence \eqref{e50} gives
\begin{equation}\label{e51}
\inf_{\{|x-(0,y')|<a\}} \bar{\mathbb{P}}_x\big(\bar\tau_{\R^3-K}<\infty\big) \geq
\frac{1}{8\pi a} \int_K \mu_K(dz) = \frac{1}{8\pi a}\,\textup{cap}(K).
\end{equation}
Furthermore, if $x\in \partial C_1$, $z\in K$ and $|y'|\leq a$, then $|z-x|\ge 1-2a$. Hence
\eqref{e50} also gives
\begin{equation}\label{e52}
\sup_{x\in \partial C_1} \bar{\mathbb{P}}_x\big(\bar\tau_{\R^3-K}<\infty\big) \leq \frac{1}{4\pi(1-2a)}\,\textup{cap}(K).
\end{equation}
Combining \eqref{e51} and \eqref{e52}, we get
\begin{equation}\label{e53}
\bar{\mathbb{P}}_x\big(\bar\tau_{\R^3-K}<\bar{\tau}(C_1)\big) \geq \frac{1-4a}{8\pi a(1-2a)}\,\textup{cap}(K),
\qquad K \subset B((0,y');a),\, |x-(0,y')| \leq a\,\ |y'| \leq a.
\end{equation}
Combining \eqref{e45}, \eqref{e48} and \eqref{e53}, we arrive at
\begin{align}\label{e54}
c
&\geq \frac{1-4a^2}{4\pi}\,\,\frac{1-4a}{8\pi a(1-2a)}
\int_{D_a}dy'\, \int_{\{x\in \R^3\colon\,|x-(0,y')|<a\}} dx\,
\mathbb{E}_{(0,y')}\big[\textup{cap}\big(\mathfrak {B}(B(\beta(0);a))\big)\big]\nonumber \\
&=\frac{(1-4a)(1+2a)a^4}{24}\,
\mathbb{E}_0\big[\textup{cap}\big(\mathfrak {B}(B(0;a))\big)\big]\nonumber \\ &
=\frac{(1-4a)(1+2a)a^5}{24}\,\kappa,
\end{align}
where we have used that $\mathcal{H}^2(D_a)=\pi a^2$, $|B(0;a)| = \frac{4\pi}{3}a^3,$ and
\begin{equation*}
\mathbb{E}_0\big[\textup{cap}\big(\mathfrak{B}(B(0;a))\big)\big]
= a\,\mathbb{E}_0\big[\textup{cap}\big(\mathfrak{B}(B(0;1))\big)\big] = \kappa a.
\end{equation*}
The right-hand side of \eqref{e54} is maximal when
\begin{equation*}
a=\frac{\sqrt{79}-3}{28}.
\end{equation*}
This choice of $a$ yields the left-hand side of \eqref{e25a}. \qed


\subsection{Proof of Theorem \ref{the1.3}}\label{subsec4.2}

We first prove the upper bound. By \eqref{e35},
\begin{equation}\label{e65}
\mathbb{E}_{0}\big[\tau'(D_R)^{1/2}\big] = \int_{[0,\infty)} d\tau\, \tau^{1/2}\,
\mathbb{P}_{y'}\big(\tau'(D_R)\in d\tau\big)
= \frac12 \int_{[0,\infty)} d\tau\,\tau^{-1/2} \int_{D_R} dz'\, p_{D_R}'(0,z';\tau).
\end{equation}
By the monotonicity of the Dirichlet heat kernel,
\begin{equation}\label{e66}
p_{D_R}'(0,z';\tau)\le p'  _{\R^2}(0,z';\tau)=(4\pi \tau)^{-1}e^{-|z'|^2/(4\tau)}.
\end{equation}
Combining \eqref{e65} and \eqref{e66}, we get
\begin{equation}\label{e67}
\mathbb{E}_{0}\big[\tau'(D_R)^{1/2}\big]
\le \frac12 \int_{[0,\infty)} d\tau\,\tau^{-1/2} \int_{D_R} dz'\,(4\pi \tau)^{-1}\,e^{-|z'|^2/(4\tau)}
= \tfrac12 \pi^{1/2} R.
\end{equation}
Combining \eqref{e33}, \eqref{e34} and \eqref{e67}, we obtain
\begin{align}\label{e68}
\mathbb{E}_{0}\left[\mathcal{T}(C_{L,R}-\mathfrak{B}(C_{L,R}))\right] \ge
\big(L-2R)\mathcal{T}'(D_R) - \frac{8}{\pi^{1/2}}\int_{[0,\infty)}dt\ t^{1/2}Q_{D_R}'(t).
\end{align}
From \eqref{e22} we have
\begin{equation}\label{e69}
\mathcal{T}(C_{L,R}) \le \mathcal{T}'(D_R)L-\frac{4}{\pi^{1/2}}\int_{[0,\infty)}dt\ t^{1/2}Q_{D_R}'(t)
+\frac{8}{L\lambda_1'(D_R)}\mathcal{T}'(D_R).
\end{equation}
Combining \eqref{e25c}, \eqref{e68} and \eqref{e69}, we get
\begin{equation*}
\mathfrak{C}(C_{L,R})
\le 2R\mathcal{T}'(D_R)+\frac{4}{\pi^{1/2}}\int_{[0,\infty)}dt\ t^{1/2}Q_{D_R}'(t)
+\frac{8}{L\lambda_1'(D_R)}\mathcal{T}'(D_R).\nonumber\\
\end{equation*}
Since $\mathcal{T}'(D_R)=\frac{\pi}{8}R^4$, we conclude by \eqref{e41} with $\Omega=D_R$, that
\begin{align*}
\limsup_{L\rightarrow\infty} \mathfrak{C}(C_{L,R}) \le \frac{\pi}{4} \left(1+\frac{1}{j_0}\right) R^5.
\end{align*}

\medskip\noindent
To prove the lower bound we start from \eqref{e43ext}. Let $a\in (0,\frac13).$ We have the following estimate:
\begin{equation*}
c ' = \mathbb{E}_0\left[ \int_{C_1}dx\,\Big(v_{C_1}(x)-v_{C_1-\mathfrak {B}(C_1)}(x)\Big)\right]
\geq \mathbb{E}_0\left[\int_{D_a}dx\,
\Big(v_{C_1}(x)-v_{C_1-\mathfrak {B}(B(\beta(0);a))}(x)\Big)\right].
\end{equation*}
Fix a compact set $K\subset B(B(0);a) \subset \R^3$. Note that $x \mapsto v_{C_1}(x)-v_{C_1-K}(x)$
is harmonic on $C_1-K$, equals $0$ for $x \in \partial C_1$, and equals $\tfrac14(1-|x'|^2)$ for
$x\in K$. If $|x|<a$, then $|x'|<a$, $x\in K$. Hence $v_{C_1}(x)-v_{C_1-K}(x) \geq \tfrac14(1-a^2)$
for $x \in K$. We therefore have
\begin{equation*}
v_{C_1}(x)-v_{C_1-K}(x) \geq \frac{1-a^2}{4}\,\bar{\mathbb{P}}_x\big(\bar{\tau}_{\R^3-K}<\bar{\tau}(C_1)\big),
\qquad x \in C_1.
\end{equation*}
It is straightforward to check that \eqref{e51} holds for $y'=0$.
 Furthermore, if $x\in \partial C_1$ and $ z\in K$,
then $|z-x| \geq 1-a$. Hence, by \eqref{e50},
\begin{equation*}
\sup_{x\in \partial C_1}\bar{\mathbb{P}}_x\big(\tau_{\R^3-K}<\infty\big)\le \frac{1}{4\pi(1-a)}\,\textup{cap}(K).
\end{equation*}
Combining \eqref{e51} and \eqref{e52}, we get
\begin{equation*}
\bar{\mathbb{P}}_x\big(\bar{\tau}_{\R^3-K}<\bar{\tau}(C_1)\big) \geq \frac{1-3a}{8\pi a(1-a)}\,\textup{cap}(K),
\qquad K\subset B(\beta(0);a), \, |x| \leq a.
\end{equation*}
Combining \eqref{e48}, \eqref{e51} and \eqref{e53}, we arrive at
\begin{align}\label{e63}
&\mathbb{E}_0\left[\int_{C_1}dx
\Big(v_{C_1}(x)-v_{C_1-\mathfrak {B}(C_1)}(x)\Big)\right]\nonumber \\
&\geq \frac{1-a^2}{4}\,\frac{1-3a}{8\pi a(1-a)} \int_{\{x\in \R^3\colon |x|<a\}} dx\,
\mathbb{E}_0\big[\textup{cap}\big(\mathfrak {B}(B(0;a))\big)\big]\nonumber \\
&=\frac{(1-3a)(1+a)a^3}{24}\,\kappa.
\end{align}
The right-hand side of \eqref{e63} is maximal when
\begin{equation*}
a=\frac{\sqrt{61}-4}{15}.
\end{equation*}
This choice of $a$ yields the left-hand side of \eqref{e25e}. \qed


\appendix

\section{Appendix}\label{app}

The following estimate was used in Step 1 of the proof of Proposition \ref{prop1}.

\begin{lemma}\label{lem4}
Let $\O_1\subset\O_2$ be non-empty open sets in $\R^m$ and $K$ a compact set in
$\R^m$. Let the torsion functions for $\O_1,\O_2,\O_1-K,\O_2-K$ be denoted by $v_{\O_1},v_{\O_2},
v_{\O_1-K},v_{\O_2-K}$, respectively. Suppose that $\textup{inf}[\textup{spec}(-\Delta_{\O_2})]>0$.
Then
\begin{equation*}
v_{\O_2}(x)-v_{\O_2-K}(x)\ge v_{\O_1}(x)-v_{\O_1-K}(x),
\qquad x \in \O_1-K,
\end{equation*}
and
\begin{equation}\label{e73}
v_{\O_2}(x)-v_{\O_2-K}(x)\le  \frac18(m+cm^{1/2}+8)\lambda(\Omega_2)^{-1},
\qquad x \in \O_1-K,
\end{equation}
with
\begin{equation*}
c = \sqrt{5(4+\log 2)}.
\end{equation*}
\end{lemma}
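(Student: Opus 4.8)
The plan is to prove the two assertions of Lemma \ref{lem4} separately. For the monotonicity statement, the natural tool is the probabilistic representation of the torsion difference. For any open set $\O$ with $\operatorname{inf}[\operatorname{spec}(-\Delta_\O)]>0$ and compact $K$, the function $w_\O := v_\O - v_{\O-K}$ is harmonic on $\O-K$, vanishes on $\partial\O$, and equals $v_\O$ on $K\cap\O$ (since $v_{\O-K}=0$ there). By the maximum principle / strong Markov property,
\begin{equation*}
w_\O(x) = \bar{\mathbb{E}}_x\big[v_\O(\bar\beta(\vartheta))\,\mathbf{1}_{\{\vartheta\le\bar\tau(\O)\}}\big],
\qquad \vartheta := \inf\{s\ge 0\colon \bar\beta(s)\in K\},\ x\in\O-K,
\end{equation*}
where $v_\O$ is extended by $0$ outside $\O$. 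Now compare $\O_1\subset\O_2$: on $\{\vartheta\le\bar\tau(\O_1)\}$ we have $\bar\tau(\O_1)\le\bar\tau(\O_2)$, the path reaches $K$, and $v_{\O_1}(\bar\beta(\vartheta))\le v_{\O_2}(\bar\beta(\vartheta))$ by domain monotonicity of the torsion function. Since the event for $\O_1$ is contained in that for $\O_2$ and the integrand is pointwise larger, $w_{\O_1}(x)\le w_{\O_2}(x)$ for $x\in\O_1-K$. This is the easy half.

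For the quantitative upper bound \eqref{e73}, the idea is to bound $w_{\O_2}(x)=v_{\O_2}(x)-v_{\O_2-K}(x)\le v_{\O_2}(x)$ is too crude; instead bound $w_{\O_2}(x) \le \|v_{\O_2}\|_\infty \, \bar{\mathbb{P}}_x(\vartheta\le\bar\tau(\O_2))\le \|v_{\O_2}\|_\infty$. So it suffices to produce a sup-norm bound on the torsion function of $\O_2$ in terms of $\lambda(\O_2):=\operatorname{inf}[\operatorname{spec}(-\Delta_{\O_2})]$. The standard route is via the heat semigroup: from $v_{\O_2}(x)=\int_0^\infty u_{\O_2}(x;t)\,dt$ and $0\le u_{\O_2}\le 1$, split the integral at a time $T$, use $u_{\O_2}\le 1$ on $[0,T]$, and on $[T,\infty)$ use the spectral bound $\|u_{\O_2}(\cdot;t)\|_\infty = \|e^{t\Delta}1\|_\infty$. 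The decay of $\|u_{\O_2}(\cdot;t)\|_\infty$ for large $t$ is governed by $\lambda(\O_2)$, but one needs an $L^\infty\to L^\infty$ ultracontractivity-type estimate rather than just $L^2$ decay; this is where the dimension $m$ and the constant $c=\sqrt{5(4+\log 2)}$ enter. One writes $u_{\O_2}(\cdot;t) = e^{(t-s)\Delta}u_{\O_2}(\cdot;s)$, uses $\|e^{(t-s)\Delta}\|_{1\to\infty}\le (4\pi(t-s))^{-m/2}$ from \eqref{e15}, and $\|u_{\O_2}(\cdot;s)\|_{L^1}$ — but $\O_2$ need not have finite volume, so instead one should iterate $L^\infty\to L^\infty$ using $u_{\O_2}(\cdot;t)\le e^{-(t-s)\lambda(\O_2)}$ in $L^2$ combined with $\|e^{s\Delta}\|_{2\to\infty}$. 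Optimizing the splitting time $T$ (of order $\lambda(\O_2)^{-1}$ times a logarithmic/dimensional factor) produces the bound $\tfrac18(m+cm^{1/2}+8)\lambda(\O_2)^{-1}$.

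\textbf{The main obstacle} is the sup-norm torsion estimate in the generality allowed — $\O_2$ merely open with positive spectral gap, possibly of infinite volume — so one cannot simply invoke $\mathcal{T}(\O_2)/|\O_2|$-type averages or finite-volume eigenfunction expansions. The key is to get the right interplay between the on-diagonal heat-kernel bound $(4\pi t)^{-m/2}$ (which controls $\|e^{t\Delta}\|_{2\to\infty}^2 = \sup_x p_{\O_2}(x,x;2t)\le (8\pi t)^{-m/2}$ or similar) and the $L^2$ exponential decay rate $\lambda(\O_2)$, then carry out the elementary but somewhat delicate optimization over the cut time that yields the stated clean constant. I would expect the proof to: (1) establish the harmonic/probabilistic representation and monotonicity; (2) reduce \eqref{e73} to $\|v_{\O_2}\|_\infty \le \tfrac18(m+cm^{1/2}+8)\lambda(\O_2)^{-1}$; (3) prove the latter by writing $\|v_{\O_2}\|_\infty\le T + \int_T^\infty \|e^{t\Delta}1\|_\infty\,dt$, controlling the tail via $\|e^{t\Delta}1\|_\infty\le \|e^{(t/2)\Delta}\|_{2\to\infty}\,\|e^{(t/2)\Delta}1\|_2$-style chaining so that the tail integral is $O(\lambda(\O_2)^{-1}\,T^{-m/2}e^{-T\lambda(\O_2)/\text{const}})$ up to the volume-dependence being absorbed, and then choosing $T$ optimally.
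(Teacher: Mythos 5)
Your first half is correct. The representation $v_{\O}(x)-v_{\O-K}(x)=\bar{\mathbb{E}}_x\big[v_{\O}(\bar\beta(\vartheta))\,\mathbf{1}_{\{\vartheta\le\bar\tau(\O)\}}\big]$ (with $\vartheta$ the hitting time of $K$ and $v_\O$ extended by zero) follows from $v_\O(x)=\bar{\mathbb{E}}_x[\bar\tau(\O)]$, the identity $\bar\tau(\O-K)=\min\{\vartheta,\bar\tau(\O)\}$ and the strong Markov property, and then $\{\vartheta\le\bar\tau(\O_1)\}\subset\{\vartheta\le\bar\tau(\O_2)\}$ together with $v_{\O_1}\le v_{\O_2}$ gives the monotonicity. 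This is a probabilistic rendering of the paper's argument, which instead applies the maximum principle to $h=(v_{\O_2}-v_{\O_2-K})-(v_{\O_1}-v_{\O_1-K})$, harmonic on $\O_1-K$ and nonnegative on $K$ and on $\partial\O_1$ by domain monotonicity; the two routes are essentially equivalent.

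The gap is in the second half. Your reduction of \eqref{e73} to the sup-norm bound $\|v_{\O_2}\|_\infty\le\tfrac18(m+cm^{1/2}+8)\,\lambda(\O_2)^{-1}$ coincides with what the paper does (it bounds $v_{\O_2}-v_{\O_2-K}\le v_{\O_2}$ and cites Vogt \cite{HV} for that estimate, with exactly this constant), but your sketch of the estimate itself does not go through as written. Since $\O_2$ may have infinite volume (an infinite cylinder is the relevant example in this paper), $\mathbf 1\notin \Leb^2(\O_2)$ and $u_{\O_2}(\cdot;t)=e^{t\Delta_{\O_2}}\mathbf 1$ is in general not in $\Leb^2$ either (for the infinite cylinder it is independent of the axial variable), so the chaining $\|e^{t\Delta}\mathbf 1\|_\infty\le\|e^{(t/2)\Delta}\|_{2\to\infty}\|e^{(t/2)\Delta}\mathbf 1\|_2$ yields $+\infty$; the clause ``up to the volume-dependence being absorbed'' conceals precisely the difficulty. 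Your fallback, iterating $L^\infty\to L^\infty$, is the crux rather than a routine step: obtaining decay of $\|e^{t\Delta_{\O_2}}\|_{\infty\to\infty}$ of the form $C(1+\lambda t)^{m/2}e^{-\lambda t}$ from the $L^2$ spectral bound plus the Gaussian kernel bound requires genuinely new input (weighted/off-diagonal estimates that localize the far field, which is the content of \cite{HV}), not merely an optimization over the cut time, and your outline never produces the specific constant $c=\sqrt{5(4+\log 2)}$. So for \eqref{e73} you should either carry out that argument in full or, as the paper does, invoke \cite{HV}; as it stands, your step (3) is a plan, not a proof.
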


\begin{proof}
We extend the torsion functions $v_{\O_2-K}$ and  $v_{\O_1-K}$ to all of $\O_1$ by putting them
equal to $0$ on $K\cup (\R^m-\O_1)$. Define $h(x)=(v_{\O_2}(x)-v_{\O_2-K}(x))-(v_{\O_1}(x)
-v_{\O_1-K}(x))$, $x\in \O_1-K.$ Then $h$ is harmonic on $\O_1-K$, and $h(x)=v_{\O_2}(x)-v_{\O_1}(x)
\geq 0$, $x\in K$, by the domain monotonicity of the torsion function. Furthermore, $h(x)=v_{\O_2}(x)
-v_{\O_2-K}(x)\geq 0$, $x\in \partial \O_1$, by the domain monotonicity, and $h(x) \geq 0$, $x\in \O_1-K$,
by the maximum principle of harmonic functions. The estimate in \eqref{e73} follows from the non-negativity
of the torsion function, together with the estimate in \cite{HV}.
\end{proof}



\end{document}